\documentclass[a4paper, 11 pt]{article}

\usepackage{amsfonts,amsmath,amssymb,hhline,ifthen,amsthm,graphics,cite,latexsym, caption2}
\usepackage{latexsym}
\usepackage[T2A]{fontenc}
\usepackage[cp1251]{inputenc}
\usepackage[english]{babel}
\usepackage[matrix, arrow, curve]{xy}
\usepackage{graphicx}



\newcounter{q3}
\newcommand{\dsm}[3]{
{\if#20{\if#31{\frac{\partial #1}{\partial y}}\else
          {\frac{\partial^{#3} #1}{\partial y^{#3}}}
        \fi}\else
  {\if#30{\if#21{\frac{\partial #1}{\partial x}}\else
            {\frac{\partial^{#2} #1}{\partial x^{#2}}}
          \fi}\else
    {\setcounter{q3}{#2}\addtocounter{q3}{#3}
    \frac{\partial{\if{1}\arabic{q3}^{}\else^{ \arabic{q3} }\fi}#1}
    {{\if#20\else{\partial x{\if#21\else{^{#2}}\fi}}\fi}
     {\if#30\else{\partial y\if#31\else{^{#3}}\fi}\fi} }}
   \fi}
\fi} }

\newcommand{\dzm}[2]{
{\if#10{\if#21{\frac{\partial}{\partial\overline{\zeta}}}\else
          {\frac{\partial^{#2}}{\partial\overline{\zeta}^{#2}}}
        \fi}\else
  {\if#20{\if#11{\frac{\partial}{\partial\zeta}}\else
            {\frac{\partial^{#1}}{\partial\zeta^{#1}}}
          \fi}\else
    {\setcounter{q3}{#1}\addtocounter{q3}{#2}
    \frac{\partial{\if{1}\arabic{q3}^{}\else^{ \arabic{q3} }\fi}}
    {{\if#10\else{\partial\zeta{\if#21\else{^{#1}}\fi}}\fi}
     {\if#20\else{\partial\overline{\zeta}\if#21\else{^{#2}}\fi}\fi} }}
   \fi}
\fi} }

\newcounter{q2}


\newtheoremstyle{theor}
  {\medskipamount}
  {\medskipamount}
  {\itshape}
  {}
  {\bfseries}
  {.}
  {.5em}
  {}

\newtheorem{definition}{Definition}[section]
\newtheorem{theorem}[definition]{Theorem}
\newtheorem{lemma}[definition]{Lemma}
\newtheorem{proposition}[definition]{Proposition}
\newtheorem{corollary}[definition]{Corollary}

\theoremstyle{definition}
\newtheorem{remark}[definition]{Remark}
\newtheorem{example}[definition]{Example}
\numberwithin{equation}{section}

\makeindex

\newtheoremstyle{remarks}
  {0mm}
  {0mm}
  {\itshape}
  {}
  {\itshape}
  {.}
  {.5em}
  {}
\marginparsep = 1mm \marginparpush = 1mm \marginparwidth = 1cm

\pagestyle{plain} \topmargin = -1.54cm \textheight = 20cm %
\oddsidemargin = 0.46cm \textwidth = 13 cm %
\headheight = 0 cm \headsep = 10mm %

\makeatletter \@addtoreset{equation}{section} \makeatother

\begin{document}

\subsection*{\center BOUNDEDNESS OF LEBESGUE CONSTANTS AND INTERPOLATING FABER BASES}\begin{center}\textbf{V. Bilet, O. Dovgoshey and J. Prestin} \end{center}
\parshape=5
1cm 10.5cm 1cm 10.5cm 1cm 10.5cm 1cm 10.5cm 1cm 10.5cm \noindent \small {\textbf{Abstract.}  We investigate some conditions under which the Lebesgue constants or Lebesgue functions are bounded for the classical Lagrange polynomial interpolation on a compact subset of $\mathbb R$. In particular, relationships of such boundedness with uniform and pointwise convergence of Lagrange polynomials and with the existence of interpolating Faber bases are discussed.
\bigskip

\parshape=5
1cm 10.5cm 1cm 10.5cm 1cm 10.5cm 1cm 10.5cm 1cm 10.5cm \noindent \small {\bf Key words:} Lebesgue constant, Lebesgue function, Lagrange polynomial interpolation, interpolating Faber basis.

\parshape=2
1cm 10.5cm 1cm 10.5cm  \noindent \small {\bf }


\large\section{Introduction}
\hspace*{\parindent}
Let
\begin{equation*}
\mathfrak M=\{x_{k,n}\}=\begin{pmatrix}
    x_{1,1}\\
    x_{1,2}&x_{2,2}\\
    ...&...&...\\
    x_{1,n}&x_{2,n}&...&x_{n,n}\\
    ...&...&...&...&...\\
    \end{pmatrix}
\end{equation*}
be an infinite triangular matrix whose elements (nodes) are real numbers satisfying the condition $x_{k_1, n}\ne x_{k_2, n}$ for all distinct $k_1, k_2\in\{1, ..., n\}$ and every $n\in\mathbb N.$
Then define the fundamental polynomials $l_{k_0, n}=l_{k_0, n}(\mathfrak M, \cdot)$ as
\begin{equation}\label{eq1.1}
l_{k_0, n}(x)=l_{k_0, n}(\mathfrak M, x):=
\prod_{1\le k\le n, \, k\ne k_0}\frac{(x-x_{k, n})}{(x_{k_0, n}-x_{k, n})}, \quad x\in\mathbb R.
\end{equation}
The polynomials $l_{1, n}, ..., l_{n, n}$ form a basis at the linear space $H_{n-1}$ of all real algebraic polynomials of degree at most $n-1.$  
In particular, we have $l_{1, 1}\equiv 1.$

Let $X$ be an infinite compact subset of $\mathbb R.$ Denote by $C_{X}$ the Banach space of continuous functions $f: X\to\mathbb R$ with the supremum norm
\begin{equation*}
\parallel f\parallel_{X}:=\sup\{|f(x)|: x\in X\}
\end{equation*} and write $\mathfrak M\subseteq X$ if $\mathfrak M=\{x_{k,n}\}$ and $x_{k,n}\in X$ for all $n\in\mathbb N$ and $k\le n.$ For $f\in C_X,$ $\mathfrak M\subseteq X$ and $n\in\mathbb N,$ the Lagrange interpolating polynomial $L_{n}(f, \mathfrak M, \cdot)$ is the unique polynomial from $H_n$ which coincedes with $f$ at the nodes $x_{k, n+1}, k=1, ..., n+1.$ Using the fundamental polynomials we can represent $L_{n}(f, \mathfrak M, \cdot)$ in the form
\begin{equation}\label{eq1.2}
L_{n}(f, \mathfrak M, \cdot)=\sum_{k=1}^{n+1}f(x_{k, n+1})l_{k, n+1}(\mathfrak M, \cdot).
\end{equation}

For given $X,$ $\mathfrak M\subseteq X$, and $n\in\mathbb N,$ the Lebesgue function $\lambda_{n}(\mathfrak M, \cdot)$ and the Lebesgue constant $\Lambda_{n, X}(\mathfrak M)$ can be defined as
\begin{equation}\label{eq1.3}
\lambda_{n}(\mathfrak M, x):=\sup\{|L_{n}(f, \mathfrak M, x)|: \,\, \parallel f\parallel_{X}\le 1\},\, x\in\mathbb R,
\end{equation}
and, respectively, as
\begin{equation}\label{eq1.3*}
\Lambda_{n, X}(\mathfrak M):=\sup\{\lambda_{n}(\mathfrak M, x): x\in X\}.
\end{equation}
The mappings \begin{equation}\label{eq1.4*}\mathfrak{L}_{n, \mathfrak M}: C_X \to C_X \quad\mbox{with}\quad \mathfrak{L}_{n, \mathfrak M}(f)=L_{n}(f, \mathfrak M, \cdot)\end{equation} are bounded linear operators having the norms \begin{equation}\label{neweq}\parallel \mathfrak{L}_{n, \mathfrak M}\parallel=\Lambda_{n, X}(\mathfrak M).\end{equation}
For every infinite compact set $X\subseteq\mathbb R$ and $\mathfrak M\subseteq X$ it is easy to prove that the equality
\begin{equation}\label{eq1.4}
\lambda_{n}(\mathfrak M, x)=\sum_{k=1}^{n+1}|l_{k, n+1}(\mathfrak M, x)|
\end{equation}
holds for each $x\in \mathbb R.$

\begin{remark}
Using formulas \eqref{eq1.1}, \eqref{eq1.3*} and \eqref{eq1.4}, we can define the Lebesgue functions $\lambda_{n}(\mathfrak M, \cdot)$ and the Lebesgue constants $\Lambda_{n, X}(\mathfrak M)$ for arbitrary nonempty set $X\subseteq\mathbb R$ and any interpolation matrix $\mathfrak M\subseteq \mathbb R.$
\end{remark}

In what follows we will denote by $\textbf{\emph{BLC}}$ (bounded Lebesgue constants) the set of compact nonvoid sets $X\subseteq [-1, 1],$ for each of which there is a matrix $\mathfrak M\subseteq [-1, 1],$ such that the corresponding sequence $\left(\Lambda_{n, X}(\mathfrak M)\right)_{n\in\mathbb N}$ is bounded, i.e.,
\begin{equation}
\Lambda_{n, X}(\mathfrak M)< c
\end{equation}
holds for some $c>0$ and every $n\in\mathbb N.$

In the second section of the paper we will describe some details of the well-known interplay between the boundedness of Lebesgue constants $\Lambda_{n, X}(\mathfrak M)$
and the uniform convergence of Lagrange polynomials $L_{n}(f, \mathfrak M, \cdot).$ The corresponding relationships of pointwise boundedness of Lebesgue functions $\lambda_{n}(\mathfrak M, \cdot)$ with pointwise convergence of these polynomials are also described. Moreover, the second section contains a discussion of the known results describing the smallness of sets belonging to $\textbf{\emph{BLC}}$.

In the third section we obtain some new relations between the boundedness of $\Lambda_{n, X}(\mathfrak M)$ for special interpolating matrices $\mathfrak M$ and the existence of interpolating Faber bases in the space $C_X.$

\section{Boundedness and convergence in Lagrange interpolation}
\hspace*{\parindent} J. Szabados and P.~V\'{e}rtesi, \cite{SV}, write: ``... in the convergence behavior of the Lagrange interpolatory polynomials ... the Lebesgue functions ... and the Lebesgue constants ... are of fundamental importance...''.

\begin{proposition}\label{Prop1.1}
Let $X$ be an infinite compact subset of $\mathbb R$ and let $\mathfrak M\subseteq X.$ The following statements are equivalent.
\item[\rm(i)]\textit{The inequality \begin{equation*} \limsup_{n\in\mathbb N}\Lambda_{n, X}(\mathfrak M)<\infty\end{equation*} holds.}
\item[\rm(ii)]\textit{The limit relation \begin{equation} \label{1eqPr}\lim_{n\to\infty}\parallel f - L_{n}(f, \mathfrak M,\cdot) \parallel_{X}=0\end{equation} is valid for every $f\in C_{X}.$}
\item[\rm(iii)]\textit{The inequality
\begin{equation}\label{1eqPr*}
\limsup_{n\to\infty}\parallel L_{n}(f, \mathfrak M, \cdot)\parallel_{X}<\infty
\end{equation}
holds for every $f\in C_X.$}
\end{proposition}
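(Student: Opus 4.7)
The plan is to establish the cycle of implications (i) $\Rightarrow$ (ii) $\Rightarrow$ (iii) $\Rightarrow$ (i), which is a classical argument combining the Weierstrass approximation theorem with the Banach--Steinhaus principle. The key structural fact used throughout is the identity $\|\mathfrak{L}_{n,\mathfrak M}\|=\Lambda_{n,X}(\mathfrak M)$ recorded in \eqref{neweq}, together with the reproducing property $L_n(p,\mathfrak M,\cdot)=p$ for every $p\in H_n$.

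For (i) $\Rightarrow$ (ii) I would argue as follows. Fix $f\in C_X$ and $\varepsilon>0$. By the Weierstrass theorem (applied on any closed interval containing the compact set $X$), choose a polynomial $p$ with $\|f-p\|_X<\varepsilon$; let $m=\deg p$. For every $n\geq m$ the operator $\mathfrak L_{n,\mathfrak M}$ fixes $p$, so
\begin{equation*}
\|f-L_n(f,\mathfrak M,\cdot)\|_X \leq \|f-p\|_X+\|\mathfrak L_{n,\mathfrak M}(p-f)\|_X \leq (1+\Lambda_{n,X}(\mathfrak M))\,\|f-p\|_X.
\end{equation*}
Together with (i), which gives a uniform bound $\Lambda_{n,X}(\mathfrak M)\leq c$ for all large $n$, this yields $\|f-L_n(f,\mathfrak M,\cdot)\|_X\leq (1+c)\varepsilon$ eventually, proving \eqref{1eqPr}.

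The implication (ii) $\Rightarrow$ (iii) is immediate: a convergent sequence in $C_X$ is norm-bounded, so $\|L_n(f,\mathfrak M,\cdot)\|_X$ is bounded, hence has finite $\limsup$. The implication (iii) $\Rightarrow$ (i) is where the deeper input enters. I would apply the uniform boundedness principle to the sequence of bounded linear operators $\mathfrak L_{n,\mathfrak M}\colon C_X\to C_X$. Since $C_X$ is a Banach space and, by (iii), $\sup_n \|\mathfrak L_{n,\mathfrak M}(f)\|_X<\infty$ for each $f\in C_X$ (the $\limsup$ is finite, and only finitely many initial terms remain to be absorbed), Banach--Steinhaus gives $\sup_n\|\mathfrak L_{n,\mathfrak M}\|<\infty$. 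Invoking \eqref{neweq} translates this into $\sup_n\Lambda_{n,X}(\mathfrak M)<\infty$, which is stronger than (i).

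The only genuine subtlety is the step (iii) $\Rightarrow$ (i): it relies on the completeness of $C_X$ (so that Banach--Steinhaus applies) and on the operator-norm identity \eqref{neweq}, both already established in the paper. The remaining implications are essentially bookkeeping built on the standard Lebesgue-constant inequality $\|f-L_n(f,\mathfrak M,\cdot)\|_X\leq (1+\Lambda_{n,X}(\mathfrak M))\,\mathrm{dist}(f,H_n)$ and the density of polynomials in $C_X$.
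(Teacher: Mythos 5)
Your proposal is correct and follows essentially the same route as the paper: the cycle (i)~$\Rightarrow$~(ii)~$\Rightarrow$~(iii)~$\Rightarrow$~(i) using the Weierstrass density of polynomials for the first implication and the Banach--Steinhaus theorem together with \eqref{neweq} for the last. The only cosmetic difference is that you derive the Lebesgue inequality $\parallel f - L_{n}(f,\mathfrak M,\cdot)\parallel_{X}\le (1+\Lambda_{n,X}(\mathfrak M))\parallel f-p\parallel_{X}$ inline from the projection property of $\mathfrak L_{n,\mathfrak M}$, whereas the paper cites it as Lebesgue's lemma from the literature.
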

\begin{proof}

The linear operator $\mathfrak{L}_{n, \mathfrak M}$ is a projection of $C_{X}$ onto $H_{n}.$ Hence, by Lebesgue's lemma, see \cite[Ch.~2, Pr.~4.1]{DeVL}, we have the inequality
\begin{equation}\label{2eqPr}
\parallel L_{n}(f, \mathfrak M,\cdot) - f\parallel_{X}\le(1+\Lambda_{n, X}(\mathfrak M)) E_{n}(f)
\end{equation}
where $E_{n}(f)$ is the error of the best approximation of $f$ by $H_{n}$ in $C_{X}.$ By the Stone-Weierstrass theorem, the continuous function $f$ is uniformly approximable by polynomials on $X$, i.e.,
$
\mathop{\lim}\limits_{n\to\infty}E_{n}(f)=0.
$
Now $(\textrm{i})\Rightarrow (\textrm{ii})$ follows.

The implication (ii)$\Rightarrow$ (iii) is trivial.

Suppose that (iii) holds.To prove $(\textrm{iii})\Rightarrow (\textrm{i})$ note that equality \eqref{1eqPr*} implies the boundedness of sequences $$\left(\parallel \mathfrak L_{n, \mathfrak M}(f)\parallel_{X}\right)_{n\in\mathbb N}=\left(\parallel L_{n}(f, \mathfrak M, \cdot)\parallel_{X}\right)_{n\in\mathbb N}$$ for every $f\in C_{X}.$ Since all $\mathfrak L_{n, \mathfrak M}: C_X \to C_X$ are continuous linear operators and $C_X$ is a Banach space, the Banach-Steinhaus theorem gives us the inequality
\begin{equation*}
\sup_{n\in\mathbb N}\parallel \mathfrak{L}_{n, \mathfrak M}\parallel <\infty.
\end{equation*}
The last inequality and \eqref{neweq} imply (i).
\end{proof}

There is a pointwise analog of Proposition~\ref{Prop1.1}

\begin{proposition}\label{Prop 1.1*}
Let $X$ be an infinite compact subset of $\mathbb R$ and let $x\in X.$ The following statements are equivalent for every $\mathfrak M\subseteq X$.
\newline \emph{(i)} The inequality
\begin{equation}\label{eq1.11*}
\limsup_{n\to\infty}\lambda_{n}(\mathfrak M, x)<\infty
\end{equation}
holds.
\newline \emph{(ii)} The limit relation
\begin{equation*}
\lim_{n\to\infty}L_{n}(f, \mathfrak M, x)=f(x)
\end{equation*}
is valid for every $f\in C_{X}$.
\newline \emph{(iii)} The inequality
\begin{equation}
\limsup_{n\to\infty}|L_{n}(f, \mathfrak M, x)|<\infty
\end{equation}
holds for every $f\in C_{X}$.
\end{proposition}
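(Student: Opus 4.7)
The plan is to mirror the proof of Proposition~\ref{Prop1.1}, but replace the operators $\mathfrak L_{n,\mathfrak M}:C_X\to C_X$ with the scalar evaluation functionals $\Phi_n:C_X\to\mathbb R$ given by $\Phi_n(f):=L_n(f,\mathfrak M,x)$ for the fixed $x\in X$. These $\Phi_n$ are continuous linear functionals on the Banach space $C_X$, and by the very definition \eqref{eq1.3} of the Lebesgue function we have the identity $\|\Phi_n\|=\lambda_n(\mathfrak M,x)$. This identification is the engine that makes the whole proposition work.

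For the implication $(\mathrm{i})\Rightarrow(\mathrm{ii})$, I would argue pointwise in essentially the same way as before. Given $f\in C_X$ and any $p\in H_n$, since $L_n$ reproduces polynomials of degree $\le n$, we have
\begin{equation*}
|L_n(f,\mathfrak M,x)-f(x)|\le|L_n(f-p,\mathfrak M,x)|+|p(x)-f(x)|\le\lambda_n(\mathfrak M,x)\|f-p\|_X+\|f-p\|_X.
\end{equation*}
Minimizing over $p\in H_n$ and invoking the Stone--Weierstrass theorem to conclude $E_n(f)\to 0$, one obtains the pointwise analog of \eqref{2eqPr}, namely $|L_n(f,\mathfrak M,x)-f(x)|\le(1+\lambda_n(\mathfrak M,x))E_n(f)$, from which \eqref{eq1.11*} yields (ii). The implication $(\mathrm{ii})\Rightarrow(\mathrm{iii})$ is immediate since any convergent numerical sequence is bounded.

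For $(\mathrm{iii})\Rightarrow(\mathrm{i})$, condition (iii) says precisely that the family $\{\Phi_n\}_{n\in\mathbb N}$ is pointwise bounded on the Banach space $C_X$. The Banach--Steinhaus theorem then provides $\sup_{n\in\mathbb N}\|\Phi_n\|<\infty$, and since $\|\Phi_n\|=\lambda_n(\mathfrak M,x)$ this is exactly~(i).

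I don't anticipate a substantive obstacle: the argument is a routine transcription of Proposition~\ref{Prop1.1} to the scalar setting. The only point worth verifying carefully is the equality $\|\Phi_n\|=\lambda_n(\mathfrak M,x)$, but this is already built into \eqref{eq1.3}: the supremum defining $\lambda_n(\mathfrak M,x)$ is taken over the unit ball of $C_X$, which is the definition of the operator norm of the evaluation functional.
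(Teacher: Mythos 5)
Your proof is correct and follows essentially the same route as the paper: the authors likewise prove (i)$\Rightarrow$(ii) via the pointwise Lebesgue inequality $|f(x)-L_{n}(f,\mathfrak M,x)|\le(1+\lambda_{n}(\mathfrak M,x))E_{n}(f)$ (which they cite rather than derive), note that (ii)$\Rightarrow$(iii) is trivial, and obtain (iii)$\Rightarrow$(i) from the Banach--Steinhaus theorem together with the identification of $\lambda_{n}(\mathfrak M,x)$ as the norm of the evaluation functional built into \eqref{eq1.3}.
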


\begin{proof}
Using \eqref{eq1.3} instead of \eqref{neweq} and the inequality
\begin{equation*}
|f(x)-L_{n}(f, \mathfrak M, x)|\le (1+\lambda_{n}(\mathfrak M, x))E_{n}(f)
\end{equation*}
(see \cite[p.~6]{SV}) instead of \eqref{2eqPr}, we can prove (i) $\Rightarrow$ (ii) as in the proof of Proposition~\ref{Prop1.1}. The implication (ii) $\Rightarrow$ (iii) is trivial. The Banach-Steinhaus theorem and \eqref{eq1.3} give us the implication (iii) $\Rightarrow$ (i).
\end{proof}

\begin{corollary}
Let $X$ be an infinite compact subset of $\mathbb R$ and let $\mathfrak M\subseteq X.$ The sequence $\left(\lambda_{n}(\mathfrak M, \cdot)\right)_{n\in\mathbb N}$ is pointwise bounded on $X$ if and only if the sequence $\left(L_{n}(f,\mathfrak M, \cdot)\right)_{n\in\mathbb N}$ is pointwise convergent to $f$ on $X$ for every $f\in C_X.$
\end{corollary}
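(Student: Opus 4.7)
The plan is to reduce the corollary to Proposition~\ref{Prop 1.1*} by applying it pointwise, one $x\in X$ at a time. The key observation is that ``pointwise boundedness of $(\lambda_n(\mathfrak M,\cdot))_{n\in\mathbb N}$ on $X$'' means exactly that
\begin{equation*}
\sup_{n\in\mathbb N}\lambda_n(\mathfrak M,x)<\infty \quad \text{for every } x\in X,
\end{equation*}
which is equivalent to $\limsup_{n\to\infty}\lambda_n(\mathfrak M,x)<\infty$ for every $x\in X$ (since finitely many initial terms cannot affect boundedness, and finite $\limsup$ combined with finitely many finite values yields a finite $\sup$). Similarly, ``pointwise convergence of $(L_n(f,\mathfrak M,\cdot))_{n\in\mathbb N}$ to $f$ on $X$ for every $f\in C_X$'' says exactly that statement (ii) of Proposition~\ref{Prop 1.1*} holds at every $x\in X$ and for every $f\in C_X$.

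For the forward implication, I would fix $x\in X$, use the equivalence of Proposition~\ref{Prop 1.1*} to pass from (i) to (ii) at this particular $x$, obtaining $L_n(f,\mathfrak M,x)\to f(x)$ for every $f\in C_X$. Letting $x$ range over $X$ yields pointwise convergence on all of $X$. For the converse, again fix $x\in X$ and note that (ii) of Proposition~\ref{Prop 1.1*} is satisfied at this $x$; its equivalence with (i) gives $\limsup_n \lambda_n(\mathfrak M,x)<\infty$, and letting $x$ vary over $X$ delivers pointwise boundedness.

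There is essentially no obstacle here: the corollary is a verbatim pointwise reformulation of the already-established equivalence (i)$\Leftrightarrow$(ii) of Proposition~\ref{Prop 1.1*}. The only minor care needed is the equivalence between $\sup_{n}\lambda_n(\mathfrak M,x)<\infty$ and $\limsup_{n\to\infty}\lambda_n(\mathfrak M,x)<\infty$, which is a general and elementary fact about nonnegative real sequences.
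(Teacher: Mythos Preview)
Your proposal is correct and matches the paper's approach: the paper states this corollary without any proof, treating it as an immediate consequence of Proposition~\ref{Prop 1.1*} applied pointwise, which is precisely what you do. The small remark about $\sup_n$ versus $\limsup_n$ is accurate and is the only thing one might add to make the deduction fully explicit.
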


For the classical case $X=[-1, 1]$ there exists a lot of important results connected with the unboundedness of the Lebesgue constants and the Lebesgue functions.

In 1914 G.~Faber \cite{Faber}, for every matrix $\mathfrak M\subseteq [-1, 1],$ proved the existence of $f\in C_{[-1,1]}$ satisfying the inequality
\begin{equation}
\limsup_{n\to\infty}\parallel f- L_{n}(f, \mathfrak M, \cdot)\parallel_{[-1, 1]}>0
\end{equation}
that, by Proposition~\ref{Prop1.1}, is an equivalent for
\begin{equation}
\limsup_{n\to\infty}\Lambda_{n, [-1, 1]}(\mathfrak M)=\infty.
\end{equation}
At 1931, S. N. Bernstein \cite{Be1} found that for every $\mathfrak M\subseteq [-1, 1]$ there are $f\in C_{[-1, 1]}$ and $x_0\in [-1, 1]$ such that
\begin{equation}\label{eq1.9}
\limsup_{n\to\infty}|L_{n}(f, \mathfrak M, x_0)|=\infty.
\end{equation}
This equality together with Proposition~\ref{Prop 1.1*} gives the existence of a point $x_0\in [-1, 1]$ satisfying
\begin{equation}\label{eq1.10}
\limsup_{n\to\infty}\lambda_{n}(\mathfrak M, x_0)=\infty.
\end{equation}
In 1980 P.~Erd\"{o}s and P.~V\'{e}rtesi \cite{EV} proved the following
\begin{theorem}\label{Th1.1}
Let $\mathfrak M\subseteq [-1, 1].$ Then there is $f\in C_{[-1, 1]}$ such that limit relation \eqref{eq1.9} holds for almost all $x_0\in [-1, 1].$
\end{theorem}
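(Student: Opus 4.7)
The strategy is to combine a quantitative metric lower bound for the Lebesgue functions $\lambda_n(\mathfrak M, \cdot)$ with a condensation-of-singularities argument based on Proposition~\ref{Prop 1.1*}. Bernstein's result \eqref{eq1.10} already supplies one point $x_0$ at which $\limsup_{n\to\infty}\lambda_n(\mathfrak M, x_0)=\infty$; the task here is to promote this to a set of full measure and, simultaneously, to exhibit a single $f\in C_{[-1,1]}$ whose Lagrange polynomials diverge on that set.

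First I would establish the key metric estimate: there is an absolute constant $c>0$ such that for every interpolation matrix $\mathfrak M\subseteq[-1,1]$,
\begin{equation*}
\mu\bigl(\{x\in[-1,1]:\lambda_n(\mathfrak M,x)\ge c\log n\}\bigr)\longrightarrow 2\quad\text{as }n\to\infty,
\end{equation*}
where $\mu$ denotes Lebesgue measure. The proof rests on a combinatorial analysis of the fundamental polynomials \eqref{eq1.1}: however the nodes $x_{1,n+1},\ldots,x_{n+1,n+1}$ are placed in $[-1,1]$, at most points $x$ one can find an index $k$ for which $|l_{k,n+1}(\mathfrak M,x)|$ has order $\log n$. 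Passing to a subsequence $(n_j)$ with $\mu(\{\lambda_{n_j}(\mathfrak M,\cdot)<c\log n_j\})\le j^{-2}$ and applying the Borel--Cantelli lemma then yields $\limsup_{n\to\infty}\lambda_n(\mathfrak M,x)=\infty$ for almost every $x\in[-1,1]$.

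Next I would build a single bad $f$ by a gliding-hump construction along a rapidly growing subsequence $(n_j)$. Using the definition \eqref{eq1.3}, for each $j$ choose $g_j\in C_{[-1,1]}$ with $\parallel g_j\parallel_{[-1,1]}\le 1$ and a measurable set $E_j\subseteq[-1,1]$ of measure close to $2$ such that $|L_{n_j}(g_j,\mathfrak M,x)|\ge (c/2)\log n_j$ for every $x\in E_j$. Constructing $f=\sum_j\varepsilon_j g_j$ inductively — at stage $j$, first fixing $\varepsilon_j>0$ small enough to keep $\sum_j\varepsilon_j<\infty$, then choosing $n_j$ so large relative to the already fixed data that $\varepsilon_j\log n_j\to\infty$ and that the cross-contribution $\parallel\sum_{i\ne j}\varepsilon_i L_{n_j}(g_i,\mathfrak M,\cdot)\parallel_{[-1,1]}$ is bounded (using that $L_{n_j}$ acts as the identity on polynomial approximants of $g_i$ for $i<j$, and that the summable tail $\sum_{i>j}\varepsilon_i\Lambda_{n_j,[-1,1]}(\mathfrak M)$ can be forced arbitrarily small by inductive choice) — one arranges $f\in C_{[-1,1]}$ and
\begin{equation*}
|L_{n_j}(f,\mathfrak M,x)|\ge \tfrac{c}{2}\,\varepsilon_j\log n_j - O(1)\longrightarrow\infty\quad\text{for all }x\in E_j.
\end{equation*}
A second Borel--Cantelli argument applied to $(E_j)$ yields $\limsup_{n\to\infty}|L_n(f,\mathfrak M,x_0)|=\infty$ for almost every $x_0\in[-1,1]$.

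The main obstacle is unquestionably the metric lower bound in Step~1. Neither Faber's theorem nor Bernstein's theorem nor Proposition~\ref{Prop 1.1*} produces more than a single divergence point per matrix, whereas Theorem~\ref{Th1.1} demands divergence on a set of full measure. Deriving the claimed estimate requires delicate quantitative control over the placement of the interpolation nodes --- essentially a Tur\'an-type argument showing that at most points of $[-1,1]$ at least one fundamental polynomial $l_{k,n+1}(\mathfrak M,\cdot)$ must be of logarithmic size --- and this is the genuine technical contribution of Erd\"{o}s and V\'{e}rtesi. A secondary but nontrivial issue is the simultaneous control of the cross-terms in the gliding-hump construction, which is handled by the inductive choice of parameters sketched above.
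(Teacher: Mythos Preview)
The paper does not supply a proof of Theorem~\ref{Th1.1}; the result is quoted from Erd\"{o}s and V\'{e}rtesi \cite{EV} and used as a black box, so there is no argument in the paper to compare yours against. Your high-level plan --- a measure-theoretic lower bound on $\lambda_n(\mathfrak M,\cdot)$ followed by a gliding-hump construction of a single bad $f$ --- does match the architecture of the original proof, and you correctly locate the principal difficulty in the metric estimate.

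There is, however, a real gap in your second step. You write that ``using the definition \eqref{eq1.3}'' one can select a single $g_j\in C_{[-1,1]}$, $\parallel g_j\parallel_{[-1,1]}\le 1$, with $|L_{n_j}(g_j,\mathfrak M,x)|\ge(c/2)\log n_j$ for \emph{every} $x\in E_j$. But \eqref{eq1.3} is a pointwise supremum: the extremiser at a given $x$ is determined by the sign pattern $\bigl(\operatorname{sgn}l_{k,n_j+1}(\mathfrak M,x)\bigr)_{k=1}^{n_j+1}$, and this pattern changes with $x$. Nothing in your sketch explains why one choice of $g_j$ should serve a set of nearly full measure; a pigeonhole over the $O(n_j^{2})$ sign regions of the fundamental polynomials loses far too much, and a random-signs (Khintchine) argument only recovers a lower bound of order $\lambda_{n_j}(\mathfrak M,x)/\sqrt{n_j+1}$. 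In \cite{EV} the construction of $f$ is not cleanly decoupled from the metric estimate: the analysis of the fundamental polynomials tracks, for most $x$, \emph{which} indices $k$ make $|l_{k,n+1}(\mathfrak M,x)|$ large, and $f$ is then assembled by assigning node values compatibly with that structural information across many $x$ at once. Without importing that finer output of Step~1, the passage from ``$\lambda_{n_j}$ is large on $E_j$'' to ``some $g_j$ makes $L_{n_j}(g_j,\mathfrak M,\cdot)$ large on all of $E_j$'' is unjustified. (Your cross-term control has a related fragility: with $g_i$ merely continuous you need $E_{n_j-1}(g_i)\,\Lambda_{n_j,[-1,1]}(\mathfrak M)$ bounded, which can fail for bad $\mathfrak M$; this is repairable by taking each $g_i$ to be a polynomial, but that in turn interacts with the norm constraint $\parallel g_i\parallel_{[-1,1]}\le 1$.)
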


This theorem implies the following corollary.

\begin{corollary}\label{C1}
Let $X$ be an infinite compact subset of $\mathbb R.$ Let us denote by $m_{1}(X)$ the one-dimensional Lebesgue measure of $X.$ Write $$a=\min\{x: x\in X\} \,\, \mbox{and} \,\, b=\max\{x: x\in X\}.$$ If there is $\mathfrak M\subseteq [a, b]$ such that inequality \eqref{eq1.11*}
holds for every $x\in X,$ then $X$ is nowhere dense and
\begin{equation}\label{eq1.11}
m_{1}(X)=0.
\end{equation}

\end{corollary}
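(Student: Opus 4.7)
The plan is to reduce to the classical interval $[-1,1]$ and combine Theorem~\ref{Th1.1} with Proposition~\ref{Prop 1.1*}. First I would introduce the affine bijection $\varphi\colon[a,b]\to[-1,1]$, $\varphi(x)=(2x-(a+b))/(b-a)$, and note that since Lagrange interpolation is equivariant under affine change of variable, the fundamental polynomials satisfy $l_{k,n}(\mathfrak M,x)=l_{k,n}(\varphi(\mathfrak M),\varphi(x))$ (both sides are polynomials of degree $n-1$ taking the value $\delta_{kk_0}$ at the matching nodes). In view of \eqref{eq1.4}, this gives $\lambda_n(\mathfrak M,x)=\lambda_n(\varphi(\mathfrak M),\varphi(x))$ for every $x\in[a,b]$ and every $n$. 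In particular, the hypothesis~\eqref{eq1.11*} is preserved under $\varphi$, and $\varphi(\mathfrak M)\subseteq[-1,1]$.

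Next I would apply Theorem~\ref{Th1.1} to $\varphi(\mathfrak M)$: there exists $f\in C_{[-1,1]}$ such that
\begin{equation*}
\limsup_{n\to\infty}|L_{n}(f,\varphi(\mathfrak M),y)|=\infty
\end{equation*}
for $m_1$-almost every $y\in[-1,1]$. Applied with $X=[-1,1]$, the contrapositive of the implication \mbox{(i)$\Rightarrow$(iii)} in Proposition~\ref{Prop 1.1*} then forces $\limsup_{n\to\infty}\lambda_n(\varphi(\mathfrak M),y)=\infty$ at every such $y$. Hence the set $E:=\{y\in[-1,1]:\limsup_n\lambda_n(\varphi(\mathfrak M),y)<\infty\}$ has $m_1(E)=0$.

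Pulling $E$ back via $\varphi^{-1}$ (which is affine, hence preserves Lebesgue null sets) and using the identity $\lambda_n(\mathfrak M,x)=\lambda_n(\varphi(\mathfrak M),\varphi(x))$, the set $E':=\{x\in[a,b]:\limsup_n\lambda_n(\mathfrak M,x)<\infty\}$ also satisfies $m_1(E')=0$. By hypothesis $X\subseteq E'$, so $m_1(X)=0$, which is~\eqref{eq1.11}. For the nowhere density, $X$ is closed in $\mathbb R$ (being compact), and a closed subset of $\mathbb R$ with Lebesgue measure zero cannot contain any nondegenerate interval, so its interior is empty; a closed set with empty interior is nowhere dense.

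The only real subtlety is the transfer step in the first paragraph, where one must check that the affine rescaling behaves correctly with respect to the fundamental polynomials and the Lebesgue functions; everything else is a direct chaining of Theorem~\ref{Th1.1}, Proposition~\ref{Prop 1.1*}, and elementary properties of Lebesgue measure.
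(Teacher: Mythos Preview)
Your proof is correct and follows essentially the same approach as the paper: reduce to $[-1,1]$ via an affine map, invoke Theorem~\ref{Th1.1} (together with Proposition~\ref{Prop 1.1*}) to obtain $m_1(X)=0$, and then deduce nowhere density from compactness plus measure zero. The paper's proof is terser---it writes only ``\eqref{eq1.11} follows from Theorem~\ref{Th1.1}''---whereas you make explicit the intermediate step through Proposition~\ref{Prop 1.1*}, but the substance is the same.
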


\begin{proof}
Since the fundamental polynomials are invariant under the affine trasformations of $\mathbb R$, we may suppose that $a=-1$ and $b=+1.$
Now, \eqref{eq1.11} follows from Theorem~\ref{Th1.1}. Equality \eqref{eq1.11} implies that the interior of $X$ is empty, $Int X=\varnothing.$ Since $X$ is compact, we have $\overline{X}=X,$ where $\overline{X}$ is the closure of $X$. Consequently, the equality $Int\overline{X}=\varnothing$ holds, it means that $X$ is nowhere dense.
\end{proof}

\begin{corollary}\label{C3}
If $X$ belongs to $\textbf{BLC}$, then $X$ is nowhere dense in $\mathbb R$ and its one-dimensional Lebesgue measure is zero.
\end{corollary}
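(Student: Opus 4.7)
The plan is to reduce Corollary~\ref{C3} to (essentially) Corollary~\ref{C1}. By the very definition of $\textbf{\emph{BLC}}$, any $X\in\textbf{\emph{BLC}}$ is already a compact subset of $[-1,1]$, and there exists $\mathfrak M\subseteq[-1,1]$ and a constant $c>0$ with
\[
\Lambda_{n,X}(\mathfrak M)<c\quad\text{for every }n\in\mathbb N.
\]
The finite case is trivial (a finite set is nowhere dense and has Lebesgue measure zero), so I would immediately pass to the infinite case.

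Assume $X$ is infinite. From \eqref{eq1.3*} we have $\lambda_n(\mathfrak M,x)\le\Lambda_{n,X}(\mathfrak M)<c$ for every $x\in X$ and every $n\in\mathbb N$, so
\[
\limsup_{n\to\infty}\lambda_n(\mathfrak M,x)\le c<\infty\qquad\text{for every }x\in X.
\]
This is precisely condition \eqref{eq1.11*} of Corollary~\ref{C1}. The only formal obstacle is that Corollary~\ref{C1} is stated with $\mathfrak M\subseteq[a,b]$, where $a=\min X$ and $b=\max X$, whereas here we only have $\mathfrak M\subseteq[-1,1]$. I would address this by reading the proof of Corollary~\ref{C1} and noting that the affine normalization step serves only to place the ambient interval at $[-1,1]$: since $X\subseteq[-1,1]$ already and $\mathfrak M\subseteq[-1,1]$, no normalization is required, and the remainder of that proof (invoke Theorem~\ref{Th1.1} to produce $f\in C_{[-1,1]}$ with $\limsup_n|L_n(f,\mathfrak M,x_0)|=\infty$ for almost every $x_0\in[-1,1]$, then apply the pointwise equivalence in Proposition~\ref{Prop 1.1*} on the ambient set $[-1,1]$ to convert this into $\limsup_n\lambda_n(\mathfrak M,x_0)=\infty$ a.e.) goes through verbatim. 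This forces $m_1(X)=0$, for otherwise $X$ would meet the full-measure set on which $\lambda_n(\mathfrak M,\cdot)$ is unbounded, contradicting the bound above.

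Finally, $m_1(X)=0$ gives $\operatorname{Int}X=\varnothing$, and since $X$ is compact we have $X=\overline{X}$, hence $\operatorname{Int}\overline{X}=\varnothing$, i.e.\ $X$ is nowhere dense in $\mathbb R$. The only delicate point in the argument is the mild mismatch between $\mathfrak M\subseteq[-1,1]$ and $\mathfrak M\subseteq[a,b]$; once one observes that Theorem~\ref{Th1.1} and Proposition~\ref{Prop 1.1*} apply on the larger interval $[-1,1]$ without change, the conclusion is immediate.
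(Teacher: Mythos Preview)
Your argument is correct and matches the paper's intent: the paper states Corollary~\ref{C3} without proof, treating it as an immediate consequence of Corollary~\ref{C1}, and your reduction does exactly this while carefully handling the one genuine technicality (that the definition of \textbf{\emph{BLC}} only guarantees $\mathfrak M\subseteq[-1,1]$ rather than $\mathfrak M\subseteq[a,b]$). Your resolution---observing that Theorem~\ref{Th1.1} and Proposition~\ref{Prop 1.1*} already operate on $[-1,1]$, and that by \eqref{eq1.4} the Lebesgue function $\lambda_n(\mathfrak M,\cdot)$ depends only on $\mathfrak M$ and not on the ambient compact set---is exactly right.
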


\begin{example}If $X=\{x_1, x_2, ..., x_k, x_{k+1}, ...\}$ is a dense subset of $[-1, 1]$
and the matrix $\mathfrak M$ is defined such that $x_{k, n}=x_k$ for all $n\in\mathbb N$ and $k\in\{1, ..., n\},$ then we evidently have the equalities
\begin{equation}\label{eq1.12}
\limsup_{n\to\infty}\lambda_{n}(\mathfrak M, x)=\lim_{n\to\infty}\lambda_{n}(\mathfrak M, x)=1
\end{equation}
for every $x\in X.$ Consequently, the compactness of $X$ cannot be dropped in Corollary~\ref{C1}.\end{example}

It was proved by A.~A.~Privalov in \cite{Pr64}, that there are a countable set $X\subseteq [0, 1]$ and a positive constant $c_1=c_{1}(X),$ such that $0$ is the unique accumulation point of $X$ and the inequality
\begin{equation*}
\Lambda_{n, X}(\mathfrak M)\ge c_{1}\ln (n+1)
\end{equation*}
holds for every $n\in\mathbb N$ and every $\mathfrak M\subseteq[-1, 1].$

\begin{remark}
There is a constant $c_{2}>0$ for which $$\Lambda_{n, [-1,1]}(\mathfrak M)\le c_{2}\ln (n+1)$$ holds for every $n\in\mathbb N$ with $\mathfrak M=\{x_{k,n}\}$ based on the Chebyshev nodes $x_{k, n}=\cos\frac{(2k-1)\pi}{2n}.$ For details see \cite{Br}.
\end{remark}

An example of perfect set $X\in \textbf{\emph{BLC}}$ was obtained by S.~N.~Mergelyan \cite{Mr}.

P.~P.~Korovkin \cite{Kr} found a perfect $X\subseteq[-1, 1]$ and a matrix $\mathfrak M$ such that, for every $f\in C_X,$ the sequence $\left(L_{n^2}(f, \mathfrak M, \cdot)\right)_{n\in\mathbb N}$ uniformly tends to $f$,
\begin{equation*}
\sup_{n\in\mathbb N}\Lambda_{n^{2}, X}(\mathfrak M)<\infty.
\end{equation*}
At the same paper \cite{Kr}, he wrote that there is a modification of $X$ with bounded sequence of Lebesgue constants.

Corollary~\ref{C3} indicates that every $X\in\textbf{\emph{BLC}}$ must be small in a very strong sense. Moreover, the examples of A.~A.~Privalov, P.~P. Korovkin and S.~N. Mergelyan show that the properties ``be countable'' and ``belong to the class $\textbf{\emph{BLC}}$'' not linked too closely.

In the rest of the present section we discuss the desirable smallness of sets in terms of porosity.

Let us recall the definition of the right lower porosity at a point.

\begin{definition} Let $X$ be a subset of $\mathbb R$ and let $x_0\in X.$ The right lower porosity of $X$ at $x_0$ is the number
\begin{equation*}
\underline{p}^{+}(X, x_0):=\liminf_{r\to 0^{+}}\frac{\lambda(X, x_0, r)}{r}
\end{equation*}
where $\lambda(X,x_0, r)$ is the length of the largest open subinterval of the set $$(x_0, x_{0}+r)\setminus X=\{x\in (x_0, x_{0}+r): x\notin X\}.$$
\end{definition}
Replacing $(x_0, x_{0}+r)$ in the above definition by the interval $(x_{0}-r, x_0)$, we encounter the notion of the the \emph{left lower porosity} $\underline{p}^{-}(X, x_0).$
The \emph{lower porosity} of $X$ at $x_0$ is the number
\begin{equation*}
\underline{p}(X, x_0):=\max\{\underline{p}^{+}(X, x_0), \underline{p}^{-}(X, x_0)\}.
\end{equation*}
The set $X$ is \emph{strongly lower porous} if $\underline{p}(X, x_0)=1$ holds for every $x_0\in X.$

Let us consider now a modification of the lower porosity.
Write
\begin{equation}\label{eq2.8}
\underline{p}^{*}(X, x_0):=\min\{\underline{p}^{+}(X, x_0), \underline{p}^{-}(X, x_0)\}.
\end{equation}
\begin{theorem}
Let $X$ be a compact subset of $[-1, 1].$ If the inequality
\begin{equation}\label{eq2.9}
\underline{p}^{*}(X, x_0)> \frac{1}{2}
\end{equation}
holds for every $x_0\in X,$ then $X\in \textbf{BLC}.$
\end{theorem}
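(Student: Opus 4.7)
The plan is to exploit the strong ``fat gap'' structure guaranteed by $\underline{p}^*(X,x_0)>\tfrac12$ in order to hand-craft a triangular matrix $\mathfrak M\subseteq [-1,1]$ whose nodes follow a Cantor-like hierarchical pattern. The point of the strict inequality $>\tfrac12$ is that at every scale $r$, and on both the left and the right of every $x_0\in X$, a gap in the complement of $X$ of length exceeding $r/2$ appears, so in any length-$r$ window the portion of $X$ occupies strictly less than half the length. This is exactly what one needs to make each telescoping factor in $|l_{k,n+1}(x)|$ contract geometrically.

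The first step is to upgrade the pointwise condition to a uniform one. For each $x_0\in X$ the liminf definition provides $\eta(x_0)\in(1/2,1]$ and $r(x_0)>0$ such that every $r\in(0,r(x_0)]$ admits gaps of length $\ge \eta(x_0)\,r$ on both sides. A compactness/Heine--Borel argument on $X$ should then yield universal constants $\eta\in(1/2,1]$ and $r_*>0$ valid for all $x_0\in X$ and $r\in (0,r_*]$. The second step is a recursive construction: starting from $[-1,1]\supseteq X$, peel off a two-sided gap of length $\ge \eta \cdot \operatorname{diam}(\text{current cluster})$ to split each current cluster into two subclusters, each of diameter at most $(1-\eta)$ times the parent diameter, hence at most a factor $\theta:=1-\eta<1/2$. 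Iterating $m$ times one obtains $X\subseteq\bigcup_{k=1}^{N_m}I_{k,m}$ with $\operatorname{diam}I_{k,m}\le \theta^m\cdot 2$ and neighbouring clusters $I_{k,m},I_{k+1,m}$ separated by gaps of length at least $\eta$ times the sum of their diameters. From each $I_{k,m}$ pick one node in $X\cap I_{k,m}$; padding between successive generations to produce one new node per level gives the full triangular matrix $\mathfrak M$.

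The third step is the Lebesgue constant estimate. Fix $x\in X$ and write, by \eqref{eq1.4}, $\lambda_n(\mathfrak M,x)=\sum_k |l_{k,n+1}(\mathfrak M,x)|$. Each node $x_{j,n+1}$ is classified by the ``generation'' $m$ at which $x$ and $x_{j,n+1}$ first lie in distinct clusters. Pairing factors of the product $\prod_{j\ne k}\bigl|(x-x_{j,n+1})/(x_{k,n+1}-x_{j,n+1})\bigr|$ according to their generation and using the gap-to-cluster ratio $>1$ coming from $\eta>1/2$, each generation-$m$ factor is bounded by a constant $\rho<1$ independent of $m$ and $n$. This yields $|l_{k,n+1}(\mathfrak M,x)|\le C\rho^{\,d(k)}$ where $d(k)$ is the generational distance between $x$ and $x_{k,n+1}$. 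Summing the geometric series over all nodes (with bounded multiplicity per generation because of the binary splitting) bounds $\lambda_n(\mathfrak M,x)$ uniformly in $n$ and $x\in X$, proving $X\in \textbf{\emph{BLC}}$.

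The main obstacle will be Step 3: obtaining the quantitative factor $\rho<1$ for every cross-cluster ratio. This is where the inequality $\underline p^*>\tfrac12$ is indispensable, since only then is the gap between two sibling clusters strictly larger than the combined span of the clusters themselves, which is what makes each ratio $|(x-x_{j,n+1})/(x_{k,n+1}-x_{j,n+1})|$ strictly less than $1$ by a fixed margin. A secondary technical nuisance will be compatibility of the construction with the triangular matrix format (exactly one node added per level), which must be arranged without spoiling the cluster/gap ratios used in the estimates.
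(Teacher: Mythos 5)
Your proposal misses the decisive, and much simpler, point: the hypothesis $\underline{p}^{*}(X,x_0)>\frac{1}{2}$ is so strong that it forces $X$ to be finite. Indeed, $\underline{p}^{+}(X,x_0)>\frac{1}{2}$ holds if and only if $X\cap(x_0,x_0+\varepsilon]=\varnothing$ for some $\varepsilon>0$: if points $y\in X$ accumulate at $x_0$ from the right, then for $r=2(y-x_0)$ the point $y$ sits at the midpoint of $(x_0,x_0+r)$ and splits it into two subintervals of length $r/2$, so $\lambda(X,x_0,r)/r\le\frac{1}{2}$ along a sequence $r\to 0^{+}$ and the liminf cannot exceed $\frac{1}{2}$. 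The same holds on the left, so every point of $X$ is isolated; a compact discrete set is finite. The theorem then reduces to the observation that every finite set belongs to $\textbf{BLC}$: extend $X$ to a countable compact set $\{x_1,x_2,\dots\}\subseteq[-1,1]$, take $x_{k,n}=x_k$, and note that as soon as all points of $X$ occur among the first $n+1$ nodes one has $\lambda_{n}(\mathfrak M,x)=1$ for every $x\in X$, because $x$ is itself a node. This is precisely the paper's proof; your elaborate Cantor-like construction is attacking a situation that the hypothesis never produces (your own recursive splitting would terminate after finitely many generations with singleton clusters).

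Independently of this, your Step 3 would not survive scrutiny even for a genuine perfect set. The cross-cluster factors $\bigl|(x-x_{j,n+1})/(x_{k,n+1}-x_{j,n+1})\bigr|$ are not bounded by a fixed $\rho<1$: if $x$ and $x_{k,n+1}$ lie in the same deep cluster and $x_{j,n+1}$ in a distant one, the factor equals $\bigl|1+(x-x_{k,n+1})/(x_{k,n+1}-x_{j,n+1})\bigr|$, which is close to $1$ and can exceed $1$; and if $x$ and $x_{k,n+1}$ lie in different clusters, the factors with $x_{j,n+1}$ close to $x_{k,n+1}$ have numerator comparable to the separating gap and denominator comparable to the small cluster diameter, hence are large. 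Whether $\sum_{k}|l_{k,n+1}(\mathfrak M,x)|$ stays bounded for a Cantor-type set depends on a delicate balance between the numbers of nodes allotted to the various clusters, not on a per-factor geometric contraction; this is exactly why the existence of perfect sets in $\textbf{BLC}$ (Mergelyan, Korovkin) is a nontrivial result. Fortunately none of that machinery is needed here.
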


\begin{proof}
It is known that
\begin{equation*}
\underline{p}^{+}(X, x_0)>\frac{1}{2}
\end{equation*}
holds if and only if there is $\varepsilon >0,$ which satisfies the condition
\begin{equation*}
X\cap (x_0, x_{0}+\varepsilon]=\varnothing.
\end{equation*}
(See, for example, \cite[Corollary~5.5]{ADK}). Similarly, from $\underline{p}^{-}(X, x_0)>\frac{1}{2}$ it follows that $[x_{0}-\varepsilon, x_0)\cap X= \varnothing$ for some $\varepsilon >0.$ Hence, \eqref{eq2.9} implies that all points of $X$ are isolated. Thus $X$ is discrete. Every compact discrete set is finite. Let $\{x_1, ..., x_n, x_{n+1}, ...\}\subseteq [-1, 1]$ be a countable compact superset of $X$ and let $\mathfrak M=\{x_{k, n}\}$ with $x_{k, n}=x_n$ for all $n\in\mathbb N$ and $k\in\{1, ..., n\}.$ Then there is $n_0\in\mathbb N$ such that $$\lambda_{n, X}(\mathfrak M, x)=1$$ for all $n\ge n_0$ and $x\in X.$ The boundedness of $(\Lambda_{n, X}(\mathfrak{M}))_{n\in \mathbb N}$ follows. Thus, $X$ belongs to $\textbf{\emph{BLC}}.$
\end{proof}

\begin{theorem}\label{Pr2.2}
There is an infinite strongly lower porous compact set $X\subseteq[-1, 1]$ such that $X\not\in \textbf{BLC}.$
\end{theorem}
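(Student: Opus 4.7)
The plan is to take $X$ to be the countable compact set constructed by A.~A.~Privalov in \cite{Pr64}, which already lies in $[0, 1]\subseteq [-1, 1]$, has $0$ as its unique accumulation point, and satisfies $\Lambda_{n, X}(\mathfrak M)\ge c_{1}\ln(n+1)$ for some positive constant $c_{1}=c_{1}(X)$, every $n\in\mathbb N$, and every $\mathfrak M\subseteq [-1, 1]$. This bound is matrix-independent, so it immediately forces $X\notin\textbf{BLC}$. Consequently the only thing left to check is that this $X$ is strongly lower porous at every one of its points.

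For the porosity verification I would treat the two kinds of points in $X$ separately. If $x_0\in X\setminus\{0\}$, then $x_0$ is isolated in $X$: there is $\varepsilon>0$ with $(x_0-\varepsilon, x_0+\varepsilon)\cap X=\{x_0\}$. Hence, for every $r\in(0,\varepsilon)$, the sets $(x_0, x_0+r)\setminus X$ and $(x_0-r, x_0)\setminus X$ coincide with $(x_0, x_0+r)$ and $(x_0-r, x_0)$ respectively, and these are themselves the largest open subintervals entering the definition of $\lambda$. Therefore $\underline{p}^{+}(X, x_0)=\underline{p}^{-}(X, x_0)=1$, so $\underline{p}(X, x_0)=1$. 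At $x_0=0$ I would exploit that $X\subseteq [0, 1]$: for every $r>0$ the set $(-r, 0)\setminus X$ equals the whole interval $(-r, 0)$, whose length is $r$, whence $\lambda(X, 0, r)=r$ and $\underline{p}^{-}(X, 0)=1$. Taking the maximum with $\underline{p}^{+}(X, 0)$ yields $\underline{p}(X, 0)=1$, so $X$ is strongly lower porous.

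Combining the two steps produces an infinite strongly lower porous compact subset of $[-1, 1]$ that lies outside $\textbf{BLC}$, as required. The entire difficulty is packed into Privalov's matrix-independent lower bound on $\Lambda_{n, X}(\mathfrak M)$; the strong lower porosity comes essentially for free because $0$ occurs as the left endpoint of $X$, so the left one-sided lower porosity at the unique accumulation point equals $1$ regardless of how the isolated points approach $0$. The only situation in which this easy route would fail would be if the statement further required $X$ to have no extremal points in $[-1, 1]$; since no such restriction is imposed, the main (and only) genuine obstacle, namely a matrix-independent unboundedness proof, is already settled by Privalov's theorem.
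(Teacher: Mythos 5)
Your proposal is correct and follows essentially the same route as the paper: take Privalov's countable compact set $X\subseteq[0,1]$ with unique accumulation point $0$, use his matrix-independent lower bound $\Lambda_{n,X}(\mathfrak M)\ge c_1\ln(n+1)$ to exclude $X$ from $\textbf{\emph{BLC}}$, and observe that $\underline{p}^{-}(X,x_0)=1$ at every point (trivially at isolated points, and at $0$ because $X$ lies to the right of $0$), so the maximum defining $\underline{p}(X,x_0)$ equals $1$. The paper's proof is terser but identical in substance.
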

\begin{proof}
Let $X$ be the compact set, constructed by A.~A.~Privalov in \cite{Pr64}. Then $X\subseteq [0, 1]$ and $0$ is the unique accumulation point of $X$. Note that $\underline{p}^{-}(X, x_0)=1$ holds if and only if $x_0$ is an isolated point of the set $(-\infty, x_0]\cap X.$ Hence, for every $x_0\in X$ we evidently have $\underline{p}^{-}(X, x_0)=1.$ Thus $X$ is strongly lower porous by the definition.
\end{proof}

\section{Faber bases and Lagrange polynomials}

\hspace*{\parindent} In what follows we study the boundedness of the Lebesgue constants $\Lambda_{n, X}(\mathfrak M)$ for the matrices $\mathfrak M$ having the form
\begin{equation*}
\begin{pmatrix}
    x_{1}\\
    x_{1}&x_{2}\\
    ...&...&...\\
    x_{1}&x_{2}&...&x_{n}\\
    ...&...&...&...&...\\
    \end{pmatrix}.
\end{equation*}
The obtained results are inspired by some ideas of J.~Obermaier and R.~Szwarc \cite{Ob}, \cite{OS}.

Let $X$ be an infinite compact subset of $\mathbb R.$

\begin{definition}\label{r2d2.1}
A Faber basis in $C_X$ is a sequence $\tilde p=(p_k)_{k\in\mathbb N}$ of real algebraic polynomials satisfying the following conditions:
\newline \emph{(i)} For every $f\in C_X$ there is a unique sequence $(a_k)_{k\in\mathbb N}$ of real numbers such that
\begin{equation}\label{e1r2}
f=\sum_{k=1}^{\infty}a_k p_k;
\end{equation}
\newline \emph{(ii)} For every $k\in\mathbb N$ the polynomial $p_k$ has the degree $k-1,$ $\emph{deg} p_k=k-1.$
\end{definition}

\begin{remark}
As usual, equality \eqref{e1r2} means that $$\mathop{\lim}\limits_{n\to\infty}\parallel f -\mathop{\sum}\limits_{{k=1}}\limits^{n}a_k p_k \parallel_{X}=0.$$
\end{remark}

Let $\tilde p=(p_k)_{k\in\mathbb N}$ be a Faber basis in $C_X.$
For every $f\in C_X$ we shall denote by $S_{n, \tilde p}(f)$ the partial sum $\mathop{\sum}\limits_{{k=1}}\limits^{n}a_k p_k$ of series \eqref{e1r2}, i.e.,
$$S_{n, \tilde p}(f)=\sum_{k=1}^{n}a_{k}p_{k}.$$ If $n\in\mathbb N$ is given, then the partial sum operator $S_{n, \tilde p}: C_X\to C_X$ is a linear operator with the range $H_{n-1}$ and the domain $C_X$. Similarly, for an interpolation matrix $\mathfrak M\subseteq X,$ the operator, defined by \eqref{eq1.4*},
\begin{equation*}
\mathfrak L_{n, \mathfrak M}: C_X\to C_X,
\end{equation*}
has the same range and domain. Moreover, the linear operators $S_{n, \tilde p}$ and $\mathfrak L_{n, \mathfrak M}$ are projections on $H_{n-1}$, i.e., we have
\begin{equation*}
S_{n, \tilde p}(p)=\mathfrak L_{n, \mathfrak M}(p)=p
\end{equation*}
for every $p\in H_{n-1}.$ In what follows we study some conditions under which the operators $S_{n, \tilde p}$ and $\mathfrak L_{n, \mathfrak M}$ are the same for every $n\in\mathbb N.$

\begin{definition}
A Faber basis $\tilde p=(p_k)_{k\in\mathbb N}$ is interpolating if there is a sequence $(x_k)_{k\in\mathbb N}$ of distinct points of $X$ such that the equality
\begin{equation}\label{e2*.r2}
S_{k, \tilde p}(f)(x_k)=f(x_k)
\end{equation}
holds for all $f\in C_X$ and $k\in\mathbb N.$
\end{definition}
If $\tilde p$ and $(x_k)_{k\in\mathbb N}$ satisfy the above condition, then we say that $\tilde p$ is interpolating with the nodes $(x_k)_{k\in\mathbb N}.$
\begin{remark}\label{rem1.r2}
The interpolating Faber bases are a particular case of the interpolating Schauder bases for a space of continuous functions on a locally compact metric space, \cite[Definition~1.3.1]{SZ}.
\end{remark}

The following lemma is similar to Proposition~1.3.2 from \cite{SZ}.

\begin{lemma}\label{lem1.r2}
Let $X$ be an infinite compact subset of $\mathbb R$, let $\tilde p=(p_k)_{k\in\mathbb N}$ be a Faber basis in $C_X$ and let $(x_k)_{k\in\mathbb N}$ be a sequence of distinct points of $X.$ Then $\tilde p$ is interpolating with the nodes $(x_k)_{k\in\mathbb N}$ if and only if
\begin{equation}\label{e3*.r2}
p_{k}(x_k)\ne 0 \quad \mbox{and}\quad p_{k}(x_j)=0
\end{equation}
for every $k\in\mathbb N$ and $j<k.$
\end{lemma}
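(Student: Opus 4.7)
The plan is to prove both implications by exploiting the uniqueness of the coefficients in the Faber expansion together with the degree constraint $\deg p_k = k-1$.

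For the forward direction, I assume that $\tilde p$ is interpolating with nodes $(x_k)_{k\in\mathbb N}$. Fix an index $k$ and apply the expansion \eqref{e1r2} to the function $f = p_k \in C_X$. By the uniqueness of the Faber coefficients, the expansion of $p_k$ must be the trivial one, so $S_{n, \tilde p}(p_k) \equiv 0$ for $n < k$ and $S_{n, \tilde p}(p_k) = p_k$ for $n \geq k$. Specializing the interpolation condition \eqref{e2*.r2} to $f = p_k$ and to any index $j < k$ gives
\begin{equation*}
p_k(x_j) = S_{j, \tilde p}(p_k)(x_j) = 0,
\end{equation*}
which is the vanishing half of \eqref{e3*.r2}. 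The non-vanishing $p_k(x_k) \neq 0$ is then forced by a degree argument: if $p_k(x_k)$ were also zero, the polynomial $p_k$ would have the $k$ distinct roots $x_1, \ldots, x_k$, contradicting $\deg p_k = k-1$.

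For the converse, I assume that \eqref{e3*.r2} holds and take an arbitrary $f \in C_X$ with its unique Faber expansion $f = \sum_{n=1}^\infty a_n p_n$. By the definition of a Faber basis this series converges uniformly on $X$, hence pointwise at every $x_k$. For any $N \geq k$, the vanishing conditions $p_n(x_k) = 0$ for $n > k$ collapse the $N$-th partial sum to its first $k$ terms:
\begin{equation*}
\sum_{n=1}^{N} a_n p_n(x_k) = \sum_{n=1}^{k} a_n p_n(x_k) = S_{k, \tilde p}(f)(x_k).
\end{equation*}
Letting $N \to \infty$ on the left-hand side yields $f(x_k) = S_{k, \tilde p}(f)(x_k)$, which is precisely \eqref{e2*.r2}.

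No serious obstacle is foreseen: the whole argument is essentially a bookkeeping exercise based on the uniqueness of the Faber expansion and on the elementary fact that a nonzero polynomial of degree $k-1$ has at most $k-1$ roots. The distinctness of the nodes $x_k$ is invoked only in the concluding degree argument of the forward direction; note also that the condition $p_k(x_k) \neq 0$ is not needed for the converse implication, so the two halves of \eqref{e3*.r2} play asymmetric roles.
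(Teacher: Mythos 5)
Your proof is correct and follows essentially the same route as the paper's: the forward direction extracts $p_k(x_j)=0$ for $j<k$ from the uniqueness of the Faber expansion of $p_k$ applied through the interpolation condition, then rules out $p_k(x_k)=0$ by counting the roots of a polynomial of degree $k-1$; the converse evaluates the (pointwise convergent) expansion of $f$ at $x_k$ and uses the vanishing conditions to truncate it to $S_{k,\tilde p}(f)(x_k)$. The only difference is cosmetic — you make the pointwise passage to the limit in the converse slightly more explicit than the paper does.
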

\begin{proof}
Suppose that $\tilde p$ is interpolating with the nodes $(x_k)_{k\in\mathbb N}.$ We must show that \eqref{e3*.r2} holds for all $k\in\mathbb N$ and $j<k.$ Since, for each $f\in C_X$, the representation
\begin{equation}\label{e4*.r2}
f=\sum_{k=1}^{\infty}a_k p_k
\end{equation}
is unique, we have
\begin{equation}\label{e5*.r2}
p_k\ne 0
\end{equation}
for every $k\in\mathbb N.$ The equality deg$p_1=0$ together with \eqref{e5*.r2} implies \eqref{e3*.r2} for $k=1.$ Let $k\ge 2.$ The uniqueness of representation \eqref{e4*.r2} gives us the equalities
\begin{equation}\label{e6*.r2}
S_{1, \tilde p}(p_k)=... =S_{k-1, \tilde p}(p_k)=0.
\end{equation}
Since $\tilde p$ is interpolating with the nodes $(x_k)_{k\in\mathbb N},$ \eqref{e6*.r2} implies
$$p_{k}(x_1)=...=p_{k}(x_{k-1})=0.$$
If $p_{k}(x_k)=0,$ then $p_k$ has $k$ distinct zeros that contradicts the equality deg$p_k= k-1$. Condition \eqref{e3*.r2} follows.

Let \eqref{e3*.r2} hold for all $k\in\mathbb N$ and $j<k.$ Then from \eqref{e4*.r2} we obtain
$$f(x_n)=\sum_{k=1}^{\infty}a_{k}p(x_n)=\sum_{k=1}^{n}a_{k}p_{k}(x_n)=S_{n, \tilde p}(f)(x_n)$$
for every $n\in\mathbb N.$ Thus, $\tilde p=(p_k)_{k\in\mathbb N}$ is interpolating with the nodes $(x_k)_{k\in\mathbb N}.$
\end{proof}

\begin{corollary}
Let $X$ be an infinite compact subset of $\mathbb R$ and let $\tilde p=(p_k)_{k\in\mathbb N}$ be an interpolating Faber basis in $C_X.$ Then there is a unique sequence $(x_k)_{k\in\mathbb N}$ of distinct points of $X$ such that $\tilde p$ is interpolating with nodes $(x_k)_{k\in\mathbb N}.$
\end{corollary}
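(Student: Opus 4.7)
The plan is to extract uniqueness directly from Lemma~\ref{lem1.r2}, since the existence of a node sequence is already built into the definition of an interpolating Faber basis. First I would suppose that $\tilde p$ is interpolating both with nodes $(x_k)_{k\in\mathbb N}$ and with nodes $(y_k)_{k\in\mathbb N}$. Lemma~\ref{lem1.r2}, applied to each of the two sequences in turn, yields for every $k\ge 2$ that $p_k$ vanishes at all the points $x_1,\ldots,x_{k-1}$ and at all the points $y_1,\ldots,y_{k-1}$.

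Next I would combine this with the degree information. Since $\deg p_k=k-1$ and $p_k\not\equiv 0$, the polynomial $p_k$ has at most $k-1$ zeros in $\mathbb R$. Hence the $k-1$ distinct points $x_1,\ldots,x_{k-1}$ are all the real zeros of $p_k$, and so are $y_1,\ldots,y_{k-1}$. This forces the set equality
\begin{equation*}
\{x_1,\ldots,x_{k-1}\}=\{y_1,\ldots,y_{k-1}\} \qquad \text{for every } k\ge 2.
\end{equation*}

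The final step is a short induction on $k$. Taking $k=2$ gives $x_1=y_1$ because both sides are singletons. For the inductive step, assuming $x_j=y_j$ for every $j\le k-1$, the set equality $\{x_1,\ldots,x_k\}=\{y_1,\ldots,y_k\}$ together with the distinctness of the $y_j$ forces $x_k=y_k$, which completes the proof.

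The only delicate point is that $p_1$ is a nonzero constant (degree zero) and so supplies no information whatsoever about the first node. The induction must therefore be launched from $p_2$, whose single real zero pins down $x_1=y_1$; from that moment on the argument is purely combinatorial. Everything else is an immediate consequence of Lemma~\ref{lem1.r2}.
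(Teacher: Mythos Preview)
Your proof is correct and follows essentially the same approach as the paper: both arguments invoke Lemma~\ref{lem1.r2} and use the degree constraint $\deg p_k=k-1$ to pin down the zero set of each $p_k$. The only cosmetic difference is that the paper characterizes each node directly (e.g., $x_1$ is the unique zero of $p_2$, $x_2$ is the unique point where $p_3$ vanishes but $p_2$ does not, and so on), whereas you first establish the set equalities $\{x_1,\ldots,x_{k-1}\}=\{y_1,\ldots,y_{k-1}\}$ and then run a short induction; the content is the same.
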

\begin{proof}
Let $\tilde p$ be interpolating with nodes $(x_k)_{k\in\mathbb N}$. By Lemma~\ref{lem1.r2} the point $x_1$ is the unique zero of the polynomial $p_2,$ the point $x_2$ can be characterized as the unique point of $X$ for which $p_{3}(x_2)=0$ and $p_{2}(x_2)\ne 0$ an so on.
\end{proof}

Lemma~\ref{lem1.r2} implies also the following
\begin{proposition}\label{prop1.r2}
Let $X$ be an infinite compact subset of $\mathbb R$. If $\tilde p=(p_k)_{k\in\mathbb N}$ be an interpolating Faber basis in $C_X$ with nodes $(x_k)_{k\in\mathbb N},$ then for every sequence $\tilde\lambda=(\lambda_k)_{k\in\mathbb N}$ of nonzero real numbers the sequence
$$\tilde\lambda\tilde p=(\lambda_{k}p_k)_{k\in\mathbb N}$$ is also an interpolating Faber basis with the same nodes $(x_k)_{k\in\mathbb N}$. Conversely, if $\tilde q=(q_k)_{k\in\mathbb N}$ and $\tilde p=(p_k)_{k\in\mathbb N}$ are interpolating Faber bases with the same nodes, then there is a unique sequence $\tilde \mu =(\mu_k)_{k\in\mathbb N}$ of nonzero real numbers such that
$$\tilde q=\tilde\mu\tilde p=(\mu_{k}p_{k})_{k\in\mathbb N}.$$
\end{proposition}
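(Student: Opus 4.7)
The plan is to exploit Lemma~\ref{lem1.r2}, which characterizes an interpolating Faber basis by the conditions $\deg p_k=k-1$, $p_k(x_k)\ne 0$, and $p_k(x_j)=0$ for $j<k$, together with a short polynomial-degree argument.

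For the first assertion, I would verify these three conditions directly for the rescaled sequence $\tilde\lambda\tilde p=(\lambda_k p_k)_{k\in\mathbb N}$ and then check that it is a Faber basis. Since $\lambda_k\ne 0$, clearly $\deg(\lambda_k p_k)=\deg p_k=k-1$, $(\lambda_k p_k)(x_k)=\lambda_k p_k(x_k)\ne 0$, and $(\lambda_k p_k)(x_j)=0$ for $j<k$. To see that $\tilde\lambda\tilde p$ is a Faber basis, I take $f\in C_X$ with its unique expansion $f=\sum_{k=1}^{\infty}a_k p_k$ and rewrite it as $f=\sum_{k=1}^{\infty}(a_k/\lambda_k)(\lambda_k p_k)$, which gives existence of the representation. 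For uniqueness, if $f=\sum b_k(\lambda_k p_k)=\sum(b_k\lambda_k)p_k$, the uniqueness of the expansion with respect to $\tilde p$ forces $b_k\lambda_k=a_k$, hence $b_k=a_k/\lambda_k$. Then Lemma~\ref{lem1.r2} yields that $\tilde\lambda\tilde p$ is interpolating with the same nodes.

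For the converse, I apply Lemma~\ref{lem1.r2} to both $\tilde p$ and $\tilde q$. For each $k\ge 2$, the polynomials $p_k$ and $q_k$ have degree $k-1$, and both vanish at the $k-1$ distinct points $x_1,\ldots,x_{k-1}$. Since a nonzero polynomial of degree $k-1$ has at most $k-1$ roots, these are all their zeros, so there exist nonzero constants $\alpha_k,\beta_k\in\mathbb R$ with
\begin{equation*}
p_k(x)=\alpha_k\prod_{j=1}^{k-1}(x-x_j),\qquad q_k(x)=\beta_k\prod_{j=1}^{k-1}(x-x_j),
\end{equation*}
whence $q_k=\mu_k p_k$ with $\mu_k:=\beta_k/\alpha_k\ne 0$. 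For $k=1$ the polynomials $p_1,q_1$ are nonzero constants and we set $\mu_1:=q_1/p_1$. Uniqueness of $\mu_k$ is immediate, since $p_k\not\equiv 0$ implies that $\mu_k p_k=\mu_k' p_k$ forces $\mu_k=\mu_k'$.

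I do not anticipate any serious obstacle: the whole argument reduces to the characterization in Lemma~\ref{lem1.r2} plus the standard fact that a polynomial of degree $k-1$ is determined, up to a nonzero scalar, by its $k-1$ distinct zeros. The only point requiring minor care is the base case $k=1$, where the empty product convention makes $p_1$ and $q_1$ nonzero constants, and the argument proceeds separately but trivially.
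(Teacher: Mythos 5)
Your proof is correct and follows exactly the route the paper intends: the paper gives no written proof, merely noting that the proposition follows from Lemma~\ref{lem1.r2}, and your argument fills in precisely those details (verifying the basis property and the conditions \eqref{e3*.r2} for $\lambda_k p_k$, and using that a degree-$(k-1)$ polynomial vanishing at the $k-1$ distinct nodes $x_1,\dots,x_{k-1}$ is determined up to a nonzero scalar). No gaps.
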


For given nodes $(x_k)_{k\in\mathbb N},$ the interpolating Faber basis $\tilde p=(p_k)_{k\in\mathbb N}$, if such a basis exists, can be uniquely determined by the natural normalization $$p_{k}(x_k)=1$$ for every $k\in\mathbb N.$

\begin{definition}\emph{\cite{Ob}}
A Faber basis $\tilde p=(p_k)_{k\in\mathbb N}$ is called a Lagrange basis with respect to the
sequence $(x_k)_{k\in\mathbb N}$ if
\begin{equation}\label{e1*.r2}
p_{k}(x_k)=1 \quad \mbox{and} \quad p_k(x_j)=0
\end{equation}
for all $k\in\mathbb N$ and $j< k.$
\end{definition}

The following example gives us another condition of uniqueness of interpolating Faber basis corresponding to given nodes. Recall that a polynomial is \emph{monic} if its leading coefficient is equal to $1.$

\begin{example}
Let $\tilde\pi=(\pi_k)_{k\in\mathbb N}$ be an interpolating Faber basis with nodes $(x_k)_{k\in\mathbb N}$ and monic polynomials $\pi_k.$ Then $\pi_1, \pi_2, ..., \pi_k, ...$ are the Newton polynomials,
\begin{equation}\label{Newpol.r2}
\pi_{k}(x)=
\begin{cases}
         1 & \mbox{if} $ $ k=1\\
         \prod_{j=1}^{k-1}(x- x_j)& \mbox{if}$ $ k\ge 2. \\
         \end{cases}
\end{equation}
The sequence $\tilde p=(p_k)_{k\in\mathbb N}$,
\begin{equation}\label{e12*.r2}
p_{k}=
\begin{cases}
         1 & \mbox{if} $ $ k=1\\
         \frac{\pi_k}{\prod_{j=1}^{k-1}(x_k- x_j)}& \mbox{if}$ $ k\ge 2 \\
         \end{cases}
\end{equation}
is a Lagrange basis with respect to $(x_k)_{k\in\mathbb N}.$
\end{example}

\begin{theorem}\label{vspth.r2}
Let $X$ be an infinite compact subset of $\mathbb R$ and let $(x_k)_{k\in\mathbb N}$ be a sequence of distinct points of $X.$ The following two statements are equivalent.
\newline \emph{(i)} There is an interpolating Faber basis with the nodes $(x_k)_{k\in\mathbb N}.$
\newline \emph{(ii)} For every $f\in C_X$ we have
\begin{equation}\label{eqv.r2}
f=\sum_{k=1}^{\infty}f[x_1, ..., x_k]\pi_k
\end{equation}
where, for each $k\in\mathbb N,$ $\pi_{k}$ is the Newton polynomials defined by \eqref{Newpol.r2} and $f[x_1, ..., x_k]$
is the divided difference of the function $f,$
$$f[x_1]=f(x_1),\,\,f[x_1, x_2]=\frac{f(x_1)}{x_{1}-x_{2}}+\frac{f(x_2)}{x_{2}-x_{1}}, ..., $$ $$f[x_1, ..., x_k]=\sum_{j=1}^{k}\frac{f(x_j)}{\prod_{i=1, i\ne j}^{k}(x_j - x_i)}.$$
\end{theorem}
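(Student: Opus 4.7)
The plan is to prove the two directions separately. Statement (ii) gives an explicit Newton-type expansion, so to deduce (i) it suffices to verify that the Newton polynomials $(\pi_k)_{k\in\mathbb N}$ themselves form an interpolating Faber basis. Conversely, if (i) holds, I want to identify any interpolating Faber basis expansion with the Newton form by invoking uniqueness of polynomial interpolation at $x_1,\dots,x_n$. The only real substance lies in showing that partial sums in an interpolating Faber basis are polynomial interpolants of $f$ at the first $n$ nodes; once this is observed, classical Newton interpolation closes the argument.

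For $(\textrm{ii})\Rightarrow(\textrm{i})$, I will check that $\tilde\pi=(\pi_k)_{k\in\mathbb N}$ satisfies Definition \ref{r2d2.1}. The degree condition $\deg \pi_k=k-1$ is clear from \eqref{Newpol.r2}. Existence of the expansion is exactly \eqref{eqv.r2}. For uniqueness, suppose $\sum_{k=1}^\infty a_k\pi_k=0$ in $C_X$; since point evaluation is a continuous linear functional on $C_X$, the equality holds pointwise, and for each $n\in\mathbb N$ one has $\pi_k(x_n)=0$ whenever $k>n$ and $\pi_n(x_n)=\prod_{j=1}^{n-1}(x_n-x_j)\ne0$. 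A straightforward induction on $n$ (evaluating at $x_1,x_2,\dots$ in turn and solving the resulting triangular system) yields $a_n=0$ for every $n$. Hence $\tilde\pi$ is a Faber basis. Since $\pi_k(x_k)\ne0$ and $\pi_k(x_j)=0$ for $j<k$, Lemma \ref{lem1.r2} shows that $\tilde\pi$ is interpolating with nodes $(x_k)_{k\in\mathbb N}$.

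For $(\textrm{i})\Rightarrow(\textrm{ii})$, let $\tilde p=(p_k)_{k\in\mathbb N}$ be an interpolating Faber basis with nodes $(x_k)_{k\in\mathbb N}$. By Proposition \ref{prop1.r2} I may rescale to a Lagrange basis $\tilde q=(q_k)_{k\in\mathbb N}$ satisfying \eqref{e1*.r2}, and for $f\in C_X$ write $f=\sum_{k=1}^\infty b_k q_k$ in $C_X$. Since $q_k(x_j)=0$ for $k>j$, for every $n\ge j$ one has
\begin{equation*}
S_{n,\tilde q}(f)(x_j)=\sum_{k=1}^{n}b_k q_k(x_j)=\sum_{k=1}^{j}b_k q_k(x_j)=S_{j,\tilde q}(f)(x_j)=f(x_j),
\end{equation*}
so that $S_{n,\tilde q}(f)\in H_{n-1}$ is the unique polynomial of degree at most $n-1$ interpolating $f$ at the $n$ distinct points $x_1,\dots,x_n$. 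By the classical Newton interpolation formula this polynomial coincides with $\sum_{k=1}^{n}f[x_1,\dots,x_k]\pi_k$. Thus the partial sums $\sum_{k=1}^{n}f[x_1,\dots,x_k]\pi_k$ equal $S_{n,\tilde q}(f)$, and since the latter converge to $f$ in $C_X$ by the Faber basis property, \eqref{eqv.r2} follows.

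The main potential obstacle is the passage from the abstract Faber-basis expansion to the Newton form in the direction $(\textrm{i})\Rightarrow(\textrm{ii})$: one has to recognize the partial sums as genuine polynomial interpolants. This is resolved by the triangular vanishing $q_k(x_j)=0$ for $k>j$ guaranteed by Lemma \ref{lem1.r2}, together with the interpolating property \eqref{e2*.r2}. Once that observation is made, the remaining steps are algebraic (uniqueness of polynomial interpolation) and the uniqueness part of the Faber basis definition in the other direction is a straightforward inductive evaluation.
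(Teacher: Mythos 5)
Your proof is correct and follows essentially the same route as the paper's: both directions hinge on the triangular vanishing $\pi_k(x_j)=0$ for $j<k$ (so that coefficient extraction is a uniquely solvable triangular system) and on identifying the partial sums of an interpolating Faber basis with the unique polynomial interpolant at $x_1,\dots,x_n$, written in Newton form with divided differences. The only cosmetic difference is that in $(\textrm{i})\Rightarrow(\textrm{ii})$ you normalize to a Lagrange basis and argue via the partial-sum operators, whereas the paper rescales to the monic Newton basis $\tilde\pi$ and matches coefficients directly; the substance is identical.
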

\begin{proof}
(i)$\Rightarrow$(ii). If (i) holds, then by Lemma~\ref{lem1.r2} $\tilde\pi=(\pi_k)_{k\in\mathbb N}$ is an interpolating Faber basis in $C_X$ with nodes $(x_k)_{k\in\mathbb N}.$ Consequently, for every $f\in C_X$ there is a unique sequence $(y_k)_{k\in\mathbb N}$ such that
\begin{equation}\label{eqvsp.r2}
f=\sum_{k=1}^{\infty}y_{k}\pi_{k}.
\end{equation}
Since the basis $\tilde\pi$ is interpolating, we have
\begin{equation}\label{system}
\begin{cases}
         y_{1}\pi_{1}(x_1)=f(x_1),\\
         y_{1}\pi_{1}(x_2)+y_{2}\pi_{2}(x_2)=f(x_2), \\
         ................................................................\\
         y_{1}\pi_{1}(x_k)+y_{2}\pi_{2}(x_k)+ ... + y_{k}\pi_{k}(x_k)=f(x_k).
         \end{cases}
\end{equation}
The polynomial
\begin{equation*}\label{eqvvsp1.r2}
f[x_1]\pi_1+ ... + f[x_1, ..., x_k]\pi_k
\end{equation*}
coinsides with the function $f$ at the points $x_1, ..., x_k.$ (See Theorem~1.1.1 and formula (1.19) in \cite{JP} for details). Since linear system \eqref{system} has a unique solution, we have
\begin{equation}\label{eqvvsp1.r2}
y_1 = f[x_1], ..., y_{k}=f[x_1, ..., x_k].
\end{equation}
Equality \eqref{eqv.r2} follows.

(ii)$\Rightarrow$(i). Let (ii) hold. Then, the sequence $\tilde\pi=(\pi_k)_{k\in\mathbb N}$ is an interpolating Faber basis in $C_X$ if and only if \eqref{eqvsp.r2} implies \eqref{eqvvsp1.r2} for every $f\in C_X$ and every $k\in\mathbb N,$ that follows from the uniqueness of solutions of \eqref{system}.
\end{proof}
\begin{theorem}\label{r2Th2.3}
Let $X$ be an infinite compact subset of $\mathbb R$ and let $\mathfrak M=\{x_{k, n}\}$ be an interpolation matrix with the nodes in $X.$ The following conditions are equivalent.
\newline \emph{(i)} The space $C_X$ admits a Faber basis $\tilde p=(p_k)_{k\in\mathbb N}$ such that the equality
\begin{equation}\label{e2r2}
S_{n, \tilde p}=\mathfrak L_{n, \mathfrak M}
\end{equation}
holds for every $n\in\mathbb N.$
\newline \emph{(ii)} The sequence $(\Lambda_{n, X}(\mathfrak M))_{n\in\mathbb N}$ is bounded and there is a sequence $(x_k)_{k\in\mathbb N}$ of distinct points of $X$ such that for any $n\ge 2$ the tuple $(x_{1, n}, ..., x_{n, n})$ is a permutation of the set $\{x_1, ..., x_n\}.$
\end{theorem}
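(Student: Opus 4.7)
I plan to prove the two implications separately. The key tool for (i) $\Rightarrow$ (ii) is the uniqueness of the Faber expansion together with the Banach--Steinhaus theorem, while (ii) $\Rightarrow$ (i) will follow by combining Proposition~\ref{Prop1.1} with the Newton form of the Lagrange interpolating polynomial and the previously proved Theorem~\ref{vspth.r2}.

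\emph{For (i) $\Rightarrow$ (ii).} Since $\tilde p$ is a Faber basis, $S_{n,\tilde p}(f) \to f$ in $C_X$ for every $f\in C_X$, so the operators $S_{n,\tilde p}$ are pointwise bounded on $C_X$; Banach--Steinhaus then yields $\sup_n \|S_{n,\tilde p}\| < \infty$, and combining the hypothesis $S_{n,\tilde p} = \mathfrak L_{n,\mathfrak M}$ with \eqref{neweq} gives boundedness of $(\Lambda_{n,X}(\mathfrak M))_{n\in\mathbb N}$. To extract the node sequence, I first observe that uniqueness of the Faber expansion forces $S_{n-1,\tilde p}(p_n) = 0$ for each $n\ge 2$. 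By hypothesis this equals $\mathfrak L_{n-1,\mathfrak M}(p_n)$, so $p_n$ must vanish on the entire $(n-1)$-th row of $\mathfrak M$; since $\deg p_n = n-1$, these are all of the zeros of $p_n$. For arbitrary $f\in C_X$ the telescoping identity
\[
\mathfrak L_{n,\mathfrak M}(f) - \mathfrak L_{n-1,\mathfrak M}(f) = S_{n,\tilde p}(f) - S_{n-1,\tilde p}(f) = a_n(f)\,p_n
\]
then shows that $\mathfrak L_{n,\mathfrak M}(f)$ interpolates $f$ not only at the $n$-th row but also at the $(n-1)$-th row. A Tietze-extension argument (picking a continuous $f$ equal to $1$ at a hypothetical node $z$ in the $(n-1)$-th row but not in the $n$-th row, and equal to $0$ on the $n$-th row) then rules out such a configuration, so the $(n-1)$-th row must be contained in the $n$-th row. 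Setting $x_1 := x_{1,1}$ and defining $x_n$ inductively as the unique new node of the $n$-th row produces the required sequence.

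\emph{For (ii) $\Rightarrow$ (i).} Boundedness of $(\Lambda_{n,X}(\mathfrak M))_{n\in\mathbb N}$ together with Proposition~\ref{Prop1.1} gives $\|\mathfrak L_{n,\mathfrak M}(f) - f\|_X \to 0$ for every $f\in C_X$. Since $(x_{1,n},\dots,x_{n,n})$ permutes $\{x_1,\dots,x_n\}$, the polynomial $\mathfrak L_{n,\mathfrak M}(f)$ interpolates $f$ at $x_1,\dots,x_n$ and therefore, by the Newton formula,
\[
\mathfrak L_{n,\mathfrak M}(f) = \sum_{k=1}^{n} f[x_1,\dots,x_k]\,\pi_k,
\]
with $\pi_k$ as in \eqref{Newpol.r2}. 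Passing to the limit yields expansion \eqref{eqv.r2}, so by Theorem~\ref{vspth.r2} the Newton sequence $\tilde\pi = (\pi_k)_{k\in\mathbb N}$ is an interpolating Faber basis in $C_X$ with nodes $(x_k)$. Taking $\tilde p := \tilde\pi$, the identity $S_{n,\tilde p} = \mathfrak L_{n,\mathfrak M}$ is then immediate from the uniqueness of the Newton representation.

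The step I expect to be the main obstacle is the nesting argument in the first direction---showing that every node of the $(n-1)$-th row reappears in the $n$-th row. This is the point at which one must exploit the freedom to construct continuous test functions with prescribed values on finite subsets of $X$; once nesting is established, both the inductive definition of the sequence $(x_k)$ and the verification of the permutation property are routine, and the translation between the Faber and Newton descriptions in the converse direction is essentially bookkeeping via Theorem~\ref{vspth.r2}.
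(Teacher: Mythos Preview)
Your proposal is correct and follows essentially the same route as the paper: bounded partial sums via Banach--Steinhaus (the paper cites \cite[Proposition~1.1.4]{SZ}, which is the same thing), a Tietze-type test function to force the nesting of rows, and in the converse direction Proposition~\ref{Prop1.1} plus the Newton form feeding into Theorem~\ref{vspth.r2}. The only cosmetic difference is that for the nesting you go through the telescoping identity $\mathfrak L_{n,\mathfrak M}(f)-\mathfrak L_{n-1,\mathfrak M}(f)=a_n(f)p_n$ and the vanishing of $p_n$ on row $n-1$, whereas the paper reaches the same contradiction in one line via $S_{n,\tilde p}\circ S_{n+1,\tilde p}=S_{n,\tilde p}$.
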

\begin{proof}
(i)$\Rightarrow$(ii). Let $\tilde p=(p_k)_{k\in\mathbb N}$ be a Faber basis in $C_X$ and let \eqref{e2r2} hold for every $n\in\mathbb N.$ The partial sum operators are bounded for every Faber basis. (See, for example, \cite[Proposition~1.1.4]{SZ}). Hence, we have $$\sup_{n}\parallel S_{n, \tilde p}\parallel<\infty.$$
The last inequality and \eqref{e2r2} imply the boundedness of the sequence $(\Lambda_{n, X}(\mathfrak M))_{n\in\mathbb N}.$ Now to prove (ii) it suffices to show that for every $n\ge 2$ and every $k_1\le n$ there is $k_2\le n+1$ such that $$x_{k_1, n}=x_{k_2, n+1}$$ holds. Suppose that, on the contrary, there is $n\ge 2$ and $k_1\in\{1, .., n\}$ such that $$x_{k_1, n}\ne x_{k_2, n+1}$$ for all integer numbers $k_2\in\{1, ..., n+1\}.$ We can find a function $f\in C_X$ satisfying the equalities $$f(x_{k_1, n})=1\quad \mbox{and} \quad f(x_{1, n+1})=f(x_{2, n+1})=...=f(x_{n+1, n+1})=0.$$ These equalities imply that $$\mathfrak L_{n+1, \mathfrak M}(f)=L_{n}(f, \mathfrak M, \cdot)=0 \quad\mbox{and}\quad \mathfrak L_{n, \mathfrak M}(f)=L_{n-1}(f, \mathfrak M, \cdot)\ne 0.$$ Now, using the obvious equality $$S_{n, \tilde p}\circ S_{n+1, \tilde p}=S_{n, \tilde p}$$ and \eqref{e2r2} we obtain the contradiction
\begin{equation*}
0\ne \mathfrak L_{n, \mathfrak M}(f)=S_{n, \tilde p}(f)=S_{n, \tilde p}(S_{n+1, \tilde p}(f))=S_{n, \tilde p}(\mathfrak L_{n+1, \mathfrak M}(f))=S_{n, \tilde p}(0)=0.
\end{equation*}
Statement (ii) follows.

(ii)$\Rightarrow$(i). Let (ii) hold. The boundedness of the sequence $(\Lambda_{n, X}(\mathfrak M))_{n\in\mathbb N}$ implies that
\begin{equation}\label{e3r2}
\lim_{n\to\infty}\parallel f- L_{n}(f, \mathfrak M, \cdot)\parallel_{X}=0
\end{equation}
holds for every $f\in C_X.$ (See Proposition~\ref{Prop1.1}). Since the Lagrange interpolation polynomial $L_{n}(f, \mathfrak M, \cdot)$ is invariant with respect to arbitrary permutation of the nodes $x_{1, n+1}, ..., x_{n+1, n+1},$ we may suppose that
$$x_{1, n+1}=x_1, \ , x_{2, n+1}=x_2, \, ..., \, x_{n+1, n+1}=x_{n+1}$$ for every $n\in\mathbb N.$ Using the Newton polynomials $\pi_k$ (see \eqref{Newpol.r2})
we may write the polynomial $L_{n}(f, \mathfrak M, \cdot)$ in the form
\begin{equation}\label{e5r2}
L_{n}(f, \mathfrak M, \cdot)=f[x_1]\pi_1+f[x_1, x_2]\pi_2+...+f[x_1, x_2, ..., x_{n+1}]\pi_{n+1}.
\end{equation}
Hence, we have the representation $f=\sum_{k=1}^{\infty}f[x_1, ..., x_k]\pi_k.$ Now, (i) follows from Theorem~\ref{vspth.r2}.
\end{proof}

\begin{corollary}\label{r2c2.4}
Let $X$ be an infinite compact subset of $\mathbb R$ and let $\mathfrak M\subseteq X$ be an interpolation matrix with bounded $(\Lambda_{n, X}(\mathfrak M))_{n\in\mathbb N}.$ Then the following conditions are equivalent.
\newline \emph{(i)} There is a Faber basis of $C_X$ such that \eqref{e2r2} holds for every $n\in\mathbb N.$
\newline \emph{(ii)} The equality $$ \mathfrak L_{n, \mathfrak M}\circ  \mathfrak L_{n+1, \mathfrak M}=\mathfrak L_{n+1, \mathfrak M}\circ \mathfrak L_{n, \mathfrak M}$$ holds for every $n\in\mathbb N.$
\newline \emph{(iii)} The inequality $$\emph{deg} L_{n}(f, \mathfrak M, \cdot)\ge \emph{deg} L_{n-1}(f, \mathfrak M, \cdot)$$ holds for every $n\in\mathbb N$ and every $f\in C_{X}.$
\end{corollary}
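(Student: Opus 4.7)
The plan is to reduce each of (i), (ii), (iii) to the single combinatorial condition that the rows of $\mathfrak M$ are nested, namely
\[
R_n := \{x_{1,n},\dots, x_{n,n}\} \subseteq R_{n+1} \quad\text{for every } n\in\mathbb N.
\]
Under the standing boundedness assumption on $(\Lambda_{n, X}(\mathfrak M))_{n\in\mathbb N}$, the equivalence of (i) with this nesting condition follows directly from Theorem~\ref{r2Th2.3}, so only the equivalences with (ii) and (iii) require real work.

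For (ii), I would first observe that $\mathfrak L_{n, \mathfrak M}$ is a projection onto $H_{n-1}\subset H_n$ and $\mathfrak L_{n+1, \mathfrak M}$ is a projection onto $H_n$, so $\mathfrak L_{n+1, \mathfrak M}\circ\mathfrak L_{n, \mathfrak M} = \mathfrak L_{n, \mathfrak M}$ holds unconditionally. Hence (ii) is equivalent to $\mathfrak L_{n, \mathfrak M}\circ\mathfrak L_{n+1, \mathfrak M} = \mathfrak L_{n, \mathfrak M}$ for every $n$. Now $\mathfrak L_{n, \mathfrak M}(g)$ depends only on the values of $g$ on $R_n$, while $\mathfrak L_{n+1, \mathfrak M}(f)$ agrees with $f$ on $R_{n+1}$ by construction; therefore the identity is automatic provided $R_n\subseteq R_{n+1}$. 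Conversely, if some $x_{k_0, n}\in R_n\setminus R_{n+1}$, a Urysohn-type construction on the compact space $X$ produces an $f\in C_X$ with $f(x_{k_0, n}) = 1$ and $f\equiv 0$ on $R_{n+1}$; then $\mathfrak L_{n+1, \mathfrak M}(f)\equiv 0$, whereas $\mathfrak L_{n, \mathfrak M}(f)$ attains the value $1$ at $x_{k_0, n}$, breaking the identity.

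For (iii), assume the nesting condition: then $R_{n+1} = R_n\cup\{y_n\}$ for a unique point $y_n$, and the Newton update formula yields
\[
L_n(f, \mathfrak M, \cdot) = L_{n-1}(f, \mathfrak M, \cdot) + c_n\prod_{x\in R_n}(\cdot - x)
\]
for a divided-difference coefficient $c_n$. If $c_n = 0$ then $L_n = L_{n-1}$, while if $c_n\neq 0$ then $\deg L_n = n > \deg L_{n-1}$; in either case (iii) holds. Conversely, if the nesting fails and $x_{k_0, n}\in R_n\setminus R_{n+1}$ for some $n$, pick $f\in C_X$ with $f(x_{k_0, n}) = 1$ and $f\equiv 0$ on $(R_n\cup R_{n+1})\setminus\{x_{k_0, n}\}$. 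Then $L_{n-1}(f, \mathfrak M, \cdot) = l_{k_0, n}(\mathfrak M, \cdot)$, of degree exactly $n-1$, whereas $L_n(f, \mathfrak M, \cdot)\equiv 0$, contradicting (iii).

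The main technical point is the back-and-forth between the operator identity in (ii) and the node-set inclusion $R_n\subseteq R_{n+1}$: once the automatic relation $\mathfrak L_{n+1, \mathfrak M}\circ\mathfrak L_{n, \mathfrak M} = \mathfrak L_{n, \mathfrak M}$ is recognized, both converses (in (ii) and (iii)) reduce to constructing continuous ``spike'' functions taking prescribed values at a finite set of distinct nodes in $X$, which is available since the relevant nodes are pairwise distinct and $X$ is compact Hausdorff.
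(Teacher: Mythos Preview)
Your proposal is correct and follows essentially the same route as the paper: the paper derives (i)$\Rightarrow$(ii) and (i)$\Rightarrow$(iii) straight from the Faber-basis definition, and for (ii)$\Rightarrow$(i) and (iii)$\Rightarrow$(i) it refers back to the spike-function contradiction in the proof of (i)$\Rightarrow$(ii) of Theorem~\ref{r2Th2.3}, which is exactly your construction showing that failure of the nesting $R_n\subseteq R_{n+1}$ breaks both (ii) and (iii). Your write-up is more explicit (the Newton update for (iii) and the reduction of (ii) to $\mathfrak L_{n,\mathfrak M}\circ\mathfrak L_{n+1,\mathfrak M}=\mathfrak L_{n,\mathfrak M}$), but the underlying argument is the same.
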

\begin{proof}
The implications (i)$\Rightarrow$(ii) and (i)$\Rightarrow$(iii) follow directly from Definition~\ref{r2d2.1}. The proofs of (ii)$\Rightarrow$(i) and (iii)$\Rightarrow$(i) are similar to the proof (i)$\Rightarrow$(ii) in Theorem~\ref{r2Th2.3}
\end{proof}
\begin{remark}
Statements (ii) and (iii) of Corollary~\ref{r2c2.4} can be considered as some special cases of Lemma~4.7 in \cite{FHHMZ} and Theorem~20.1 in \cite{SiI} respectively.
\end{remark}

\begin{lemma}\label{vspLem}
Let $X$ be an infinite compact subset of $\mathbb R.$ The following statements are equivalent for arbitrary Faber bases $\tilde p=(p_k)_{k\in\mathbb N}$ and $\tilde q=(q_k)_{k\in\mathbb N}$ in $C_X.$
\newline \emph{(i)}There is a sequence $\tilde\lambda=(\lambda_k)_{k\in\mathbb N}$ of nonzero numbers such that
\begin{equation}\label{leq1r2}
p_k=\lambda_{k}q_k
\end{equation}
holds for every $k\in\mathbb N.$
\newline \emph{(ii)} The equality
\begin{equation}\label{leq2r2}
S_{n, \tilde p}=S_{n, \tilde q}
\end{equation}
holds for every $n\in\mathbb N.$
\end{lemma}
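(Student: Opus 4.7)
The plan is to exploit the uniqueness of coefficients in a Faber expansion together with the telescoping identity $S_{n,\tilde p}(f)-S_{n-1,\tilde p}(f)=a_n p_n$, where $a_n$ is the $n$th coefficient functional attached to $\tilde p$ (and analogously $b_n$ for $\tilde q$), with the convention $S_{0,\tilde p}:=0$. Both implications then reduce to elementary manipulations with the coefficient sequences.

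For the direction (i)$\Rightarrow$(ii), I fix $f\in C_X$ with its unique $\tilde q$-expansion $f=\sum_{k=1}^{\infty}b_k q_k$. Substituting $q_k=\lambda_k^{-1}p_k$ gives $f=\sum_{k=1}^{\infty}(b_k/\lambda_k)p_k$, and uniqueness of the $\tilde p$-expansion forces $a_k=b_k/\lambda_k$, i.e., $a_k p_k=b_k q_k$ for every $k$. Summing the first $n$ terms yields \eqref{leq2r2}.

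For the converse (ii)$\Rightarrow$(i), I fix $n\in\mathbb N$ and apply the common operator to $f=q_n$. The unique $\tilde q$-expansion of $q_n$ is itself, so $S_{n-1,\tilde q}(q_n)=0$ and $S_{n,\tilde q}(q_n)=q_n$. Writing $q_n=\sum_{m=1}^{\infty}c_m p_m$ for the $\tilde p$-expansion and using \eqref{leq2r2} for the indices $n-1$ and $n$, I obtain $\sum_{m=1}^{n-1}c_m p_m=0$ and $\sum_{m=1}^{n}c_m p_m=q_n$; subtracting gives $c_n p_n=q_n$. Since $q_n$ is not the zero function, $c_n\neq 0$, so setting $\lambda_n:=1/c_n$ produces the required relation \eqref{leq1r2}.

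The argument is essentially formal; the only point requiring any care is the tacit use of Definition~\ref{r2d2.1}(i), which guarantees that the coefficient functionals $a_n$ and $b_n$ are well defined and that $\sum c_m p_m=0$ implies all $c_m=0$. I do not anticipate a substantive obstacle.
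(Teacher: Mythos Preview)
Your proof is correct and follows essentially the same approach as the paper's: both directions reduce to the uniqueness of Faber coefficients, and in (ii)$\Rightarrow$(i) both arguments evaluate the partial sum operators at a single basis element and subtract adjacent partial sums. The only cosmetic difference is that the paper expands $p_k$ in the $\tilde q$-basis (noting directly that $p_k\in H_{k-1}=\mathrm{span}\{q_1,\dots,q_k\}$ so the sum is finite and equals $\lambda_k q_k + S_{k-1,\tilde q}(p_k)$, with $S_{k-1,\tilde q}(p_k)=S_{k-1,\tilde p}(p_k)=0$), whereas you expand $q_n$ in the $\tilde p$-basis; the two are symmetric and equally valid.
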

\begin{proof}
The implication (i)$\Rightarrow$(ii) follows from the definition of the Faber bases in $C_X.$

Let (ii) hold. Equality \eqref{leq1r2} is trivial if $k=1.$ If $k\ge 2,$ then there are $\lambda_1, ..., \lambda_k \in\mathbb R$ such that
$$p_{k}=\sum_{i=1}^{k}\lambda_{i}q_{i}=\lambda_{k}q_{k}+S_{k-1, \tilde q}(p_k).$$
Using \eqref{leq2r2} we obtain
$$S_{k-1, \tilde q}(p_k)=S_{k-1, \tilde p}(p_k)=0.$$
Hence $p_k=\lambda_{k}q_{k}$ holds. Moreover, we have $\lambda_k\ne 0$ because $$\textrm{deg} p_k= \textrm{deg} q_k=k-1.$$
\end{proof}
The following theorem is a dual form of Theorem~\ref{r2Th2.3} and it can be considered as the main result of the third section of the paper.
\begin{theorem}\label{r2th2.6}
Let $X$ be an infinite compact subset of $\mathbb R$ and let $\tilde p=(p_k)_{k\in\mathbb N}$ be a Faber basis in $C_X.$ The following conditions are equivalent.
\newline \emph{(i)} There exists an interpolation matrix $\mathfrak M\subseteq X$ such that  equality \eqref{e2r2} holds for every $n\in\mathbb N.$
\newline \emph{(ii)} The basis $\tilde p$ is interpolating.
\end{theorem}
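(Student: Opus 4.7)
The plan is to prove the two directions separately, with both reducing quickly to the structural material already developed. The forward direction (ii)$\Rightarrow$(i) is the more direct one.

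For (ii)$\Rightarrow$(i), I would take the node sequence $(x_k)_{k\in\mathbb N}$ witnessing that $\tilde p$ is interpolating and construct the matrix $\mathfrak M$ with constant columns: $x_{k,n}:=x_k$ for all $k\le n$. To verify the identification $S_{n,\tilde p}=\mathfrak L_{n,\mathfrak M}$, I would exploit the triangular vanishing property of Lemma~\ref{lem1.r2}: since $p_k(x_j)=0$ for every $j<k$, any partial sum $S_{n,\tilde p}(f)=\sum_{k=1}^n a_k p_k$ satisfies
\begin{equation*}
S_{n,\tilde p}(f)(x_j)=\sum_{k=1}^{j}a_k p_k(x_j)=S_{j,\tilde p}(f)(x_j)=f(x_j)
\end{equation*}
for each $j\le n$. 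Because $\deg p_k=k-1$, the polynomial $S_{n,\tilde p}(f)$ lies in $H_{n-1}$ and interpolates $f$ at the distinct nodes $x_1,\ldots,x_n$. Uniqueness of Lagrange interpolation then forces $S_{n,\tilde p}(f)=\mathfrak L_{n,\mathfrak M}(f)$.

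For (i)$\Rightarrow$(ii), most of the heavy lifting has already been done by Theorem~\ref{r2Th2.3}. Given a matrix $\mathfrak M\subseteq X$ realising $S_{n,\tilde p}=\mathfrak L_{n,\mathfrak M}$ for all $n$, the implication (i)$\Rightarrow$(ii) of that theorem supplies a sequence $(x_k)_{k\in\mathbb N}$ of distinct points of $X$ such that each row of $\mathfrak M$ is a permutation of $\{x_1,\ldots,x_n\}$. Because $L_n(f,\mathfrak M,\cdot)$ is permutation-invariant in its nodes, I may relabel so that $x_{k,n}=x_k$ for every admissible $k$ without altering $\mathfrak L_{n,\mathfrak M}$. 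Evaluating the identity $S_{n,\tilde p}(f)=\mathfrak L_{n,\mathfrak M}(f)$ at $x_k$ then yields $S_{n,\tilde p}(f)(x_k)=f(x_k)$ for all $k\le n$; specialising to $n=k$ gives precisely the interpolation condition \eqref{e2*.r2} and shows that $\tilde p$ is interpolating with nodes $(x_k)_{k\in\mathbb N}$.

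The main obstacle is not really in either direction taken alone, but in extracting the permutation-normalisation step of (i)$\Rightarrow$(ii), which rests on the non-trivial content of Theorem~\ref{r2Th2.3}. Once that theorem is invoked, what remains is a book-keeping exercise comparing partial sums $S_{n,\tilde p}$ with Lagrange polynomials via Lemma~\ref{lem1.r2} and the uniqueness of polynomial interpolation at prescribed distinct nodes.
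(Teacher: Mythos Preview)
Your proof is correct, but it is more direct than the paper's in both directions. For (ii)$\Rightarrow$(i) the paper first treats the special case $\tilde p=\tilde\pi$ of Newton polynomials via Theorem~\ref{vspth.r2}, and then reduces the general case to it through Lemma~\ref{vspLem} and Proposition~\ref{prop1.r2}; you bypass all of this by invoking Lemma~\ref{lem1.r2} and the uniqueness of the interpolating polynomial in $H_{n-1}$. For (i)$\Rightarrow$(ii) both arguments begin by calling Theorem~\ref{r2Th2.3} to extract the node sequence $(x_k)$, but the paper then shows $S_{n,\tilde p}=S_{n,\tilde\pi}$, applies Lemma~\ref{vspLem} to conclude $p_k=\lambda_k\pi_k$, and finally uses Proposition~\ref{prop1.r2}; you instead read off the interpolation condition $S_{k,\tilde p}(f)(x_k)=f(x_k)$ directly from the identity $S_{n,\tilde p}=\mathfrak L_{n,\mathfrak M}$ evaluated at the nodes. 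Your route is shorter and avoids the auxiliary Newton-basis machinery; the paper's route, on the other hand, makes explicit the structural fact that every interpolating Faber basis with given nodes is a diagonal rescaling of the Newton basis, which is of independent interest even if not strictly needed for the equivalence.
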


\begin{proof}
(i)$\Rightarrow$(ii). Let $\mathfrak M=\{x_{n, k}\}$ be an interpolation matrix such that $\mathfrak M\subseteq X$ and the equality
\begin{equation}\label{e1th2.14}
\mathfrak L_{n, \mathfrak M}=S_{n, \tilde p}
\end{equation}
holds for every $n\in X.$ Using Theorem~\ref{r2Th2.3} we can suppose that there is a sequence $(x_k)_{k\in\mathbb N}$ of distinct points of $X$ such that
$$x_{k, n}=x_k$$
for all $n\ge 1$ and $k\in\{1, ..., n\}.$ To prove (ii) it suffices to show that $\tilde p$ is interpolating with nodes $(x_k)_{k\in\mathbb N}.$ As in the proof of implication (ii)$\Rightarrow$(i) from Theorem~\ref{r2Th2.3} we obtain that the basis $\tilde\pi=(\pi_k)_{k\in\mathbb N}$ consisting of the corresponding Newton polynomials is an interpolating Faber basis with the nodes $(x_k)_{k\in\mathbb N}$ for which the equality
\begin{equation}\label{e2Th2.14}
\mathfrak L_{n, \mathfrak M}=S_{n, \tilde \pi}
\end{equation}
holds for every $n\in\mathbb N.$ (See equality \eqref{e5r2}). By Lemma~\ref{vspLem}, it follows from \eqref{e1th2.14} and \eqref{e2Th2.14} that there is a sequence $(\lambda_k)_{k\in\mathbb N}$ of nonzero real numbers such that
$$p_{k}=\lambda_{k}\pi_k$$
holds for every $k\in\mathbb N.$ Since $\tilde\pi$ is an interpolating Faber basis with nodes $(x_k)_{k\in\mathbb N},$ Proposition~\ref{prop1.r2} implies that $\tilde p$ is also interpolating with the same nodes.

(ii)$\Rightarrow$(i). Suppose that $\tilde p=(p_k)_{k\in\mathbb N}$ is interpolating with nodes $(x_k)_{k\in\mathbb N}.$ If $$\tilde p=\tilde\pi,$$ where $\tilde\pi=(\pi_{k})_{k\in\mathbb N}$ is the interpolating basis consisting of the Newton polynomials, then using Theorem~\ref{vspth.r2} we can show that \eqref{e2r2} holds for all $n\in\mathbb N$ with
$$\mathfrak M=\{x_{k, n}\}, \quad x_{k, n}=x_{k}, \quad n\in\mathbb N, \, k\in\{1, ..., n\}.$$
The case of an arbitrary interpolating Faber basis $\tilde p=(p_k)_{k\in\mathbb N}$ can be reduced to the case $\tilde p=\tilde\pi$ with the help of Lemma~\ref{vspLem} and Proposition~\ref{prop1.r2}.
\end{proof}

\medskip

\noindent \textbf{Acknowledgements.} The authors were supported by Grant FP7-People-2011-IRSES Project 295164, EUMLS:
EU-Ukrainian Mathematicians for Life Sciences. The first two authors were also partially supported by the State Fund For Fundamental Research (Ukraine),
Project 20570 and by Project 15-1bb$\setminus$19 ``Metric Spaces, Harmonic Analysis of Functions and Operators and Singular and Nonclassic Problems  of Differential Equations'' (Donetsk National University, Vinnitsia, Ukraine).

\bigskip

\small

\textbf{Viktoriia Bilet}

Institute of Applied Mathematics and Mechanics, NAS of Ukraine, Dobrovolskogo Str. 1, 84100, Sloviansk, Ukraine,

E-mail addresses: biletvictoriya@mail.ru, viktoriiabilet@gmail.com;

\bigskip

\textbf{Oleksiy Dovgoshey}

Institute of Applied Mathematics and Mechanics, NAS of Ukraine, Dobrovolskogo Str. 1, 84100, Sloviansk, Ukraine,

E-mail addresses: aleksdov@mail.ru, aleksdov@gmail.com;

\bigskip

\textbf{J\"{u}rgen Prestin}

Universit\"{a}t zu L\"{u}beck, Institut f\"{u}r Mathematik, Ratzeburger Allee 160, 23562, L\"{u}beck, Germany,

E-mail address: prestin@math.uni-luebeck.de.
\end{document}